\documentclass[11pt,leqno]{article}
\usepackage[utf8]{inputenc}
\usepackage{amsmath}
\usepackage{amsfonts}
\usepackage{amssymb}
\usepackage{graphicx}
\usepackage{mathrsfs}

\usepackage{mathrsfs}\usepackage{upref,amsthm,amsxtra,exscale}
\usepackage{cite}
\usepackage[colorlinks=true,urlcolor=blue,
citecolor=red,linkcolor=blue,linktocpage,pdfpagelabels,
bookmarksnumbered,bookmarksopen]{hyperref}
\allowdisplaybreaks[4]

\newtheorem{theorem}{Theorem}[section]

\newtheorem{remark}[theorem]{Remark}
\newtheorem{lemma}[theorem]{Lemma}
\newtheorem{proposition}[theorem]{Proposition}

\numberwithin{equation}{section}

\def\r{\mathbb{R}}
\def\rn{\mathbb{R}^N}

\def\z{\mathbb{Z}}

\def\eps{\varepsilon}
\def\rh{\rightharpoonup}
\def\io{\int_{\Omega}}
\def\irn{\int_{\r^N}}

\def\wt{\widetilde}

\def\cC{\mathcal{C}}

\def\cI{\mathcal{I}}
\def\cJ{\mathcal{J}}
\def\cM{\mathcal{M}}
\def\cN{\mathcal{N}}

\author{Mónica Clapp\footnote{M. Clapp was partially supported by UNAM-DGAPA-PAPIIT grant IN100718 and CONACYT grant A1-S-10457 (Mexico).}\; and Andrzej Szulkin}
\title{Solutions to indefinite weakly coupled cooperative elliptic systems}
\date{}

\begin{document}
\maketitle

\hfill\emph{To the memory of Andrzej Granas}

\medskip

\begin{abstract}
We study the elliptic system
\begin{equation*}
\begin{cases}
-\Delta u_1 - \kappa_1u_1 = \mu_1|u_1|^{p-2}u_1 + \lambda\alpha|u_1|^{\alpha-2}|u_2|^\beta u_1, \\
-\Delta u_2 - \kappa_2u_2 = \mu_2|u_2|^{p-2}u_2 + \lambda\beta|u_1|^\alpha|u_2|^{\beta-2}u_2, \\
u_1,u_2\in D^{1,2}_0(\Omega),
\end{cases}
\end{equation*}
where $\Omega$ is a bounded domain in $\rn$, $N\geq 3$, $\kappa_1,\kappa_2\in\r$, $\mu_1,\mu_2,\lambda>0$, $\alpha,\beta>1$, and $\alpha + \beta = p\le 2^*:=\frac{2N}{N-2}$.

For $p\in (2,2^*)$ we establish the existence of a ground state and of a prescribed number of fully nontrivial solutions to this system for $\lambda$ sufficiently large.

If $p=2^*$ and $\kappa_1,\kappa_2>0$ we establish the existence of a ground state for $\lambda$ sufficiently large if, either $N\ge5$, or $N=4$ and neither $\kappa_1$ nor $\kappa_2$ are Dirichlet eigenvalues of $-\Delta$ in $\Omega$.
\medskip

\noindent\textsc{Keywords:} Weakly coupled elliptic system; indefinite; cooperative; subcritical; critical; existence and multiplicity of solutions.\medskip

\noindent\textsc{MSC2010: }35J57 · 35J50 · 35B33 · 58E30.
\end{abstract}

\section{Introduction and statement of results}

We consider the elliptic system
\begin{equation} \label{syst}
\begin{cases}
-\Delta u_1 - \kappa_1u_1 = \mu_1|u_1|^{p-2}u_1 + \lambda\alpha|u_1|^{\alpha-2}|u_2|^\beta u_1, \\
-\Delta u_2 - \kappa_2u_2 = \mu_2|u_2|^{p-2}u_2 + \lambda\beta|u_1|^\alpha|u_2|^{\beta-2}u_2, \\
u_1,u_2\in D^{1,2}_0(\Omega),
\end{cases}
\end{equation}
where $\Omega$ is a bounded domain in $\rn$, $N\geq 3$, $\kappa_1, \kappa_2\in\r$, $\mu_1,\mu_2,\lambda>0$, $\alpha,\beta>1$, and $\alpha + \beta = p\le 2^*:=\frac{2N}{N-2}$. As usual, $D^{1,2}_0(\Omega)$ is the completion of $\cC^\infty_c(\Omega)$ with respect to the norm
$$\|u\|:=\left(\io |\nabla u|^2\right)^{1/2}.$$

This type of systems arise in applications (e.g., as a model for the steady states of a two-component Bose-Einstein condensate) and has attracted considerable attention in the mathematical community, beginning with the seminal paper by Lin and Wei \cite{lw}. 

The system \eqref{syst} is weakly coupled, i.e., every nontrivial solution $w_i$ to the equation 
\begin{equation} \label{eq}
-\Delta w-\kappa_iw = \mu_i|w|^{p-2}w, \quad w\in D^{1,2}_0(\Omega),
\end{equation}
$i=1,2$, gives rise to a \emph{semitrivial} solution $(w_1,0),\;(0,w_2)$ of the system. We are interested in the existence of \emph{fully nontrivial} solutions, i.e., solutions with both components $u_1$ and $u_2$ different from $0$.

If the system is positive definite, i.e., if $\kappa_1,\kappa_2<\lambda_1(\Omega)$ where $\lambda_1(\Omega)$ is the first eigenvalue of $-\Delta$ in $D^{1,2}_0(\Omega)$, it is well known that for the cubic system ($\alpha=\beta=2,\;N=3$) a positive ground state exists for sufficiently large or sufficiently small values of $\lambda>0$; see, e.g., \cite[Section 1.1]{st} and the references therein. A similar result was proved by Chen and Zou \cite{cz1,cz2} for a critical system.

On the other hand, there seem to be no results available in the literature for the indefinite case, i.e., when $\kappa_i\geq\lambda_1(\Omega)$ for some $i=1,2$. In this paper we establish, not only the existence of a ground state for any $\kappa_1, \kappa_2>0$ and $p\in (2,2^*]$, but of a prescribed number of fully nontrivial solutions when $p\in (2,2^*)$, for sufficiently large values of $\lambda$.

Note that the system \eqref{syst} is $(\z_2\times\z_2)$-invariant, where $\z_2:=\{\pm 1\}$, i.e., if $u=(u_1,u_2)$ is a solution, then every element in the $(\z_2\times\z_2)$-orbit of $u$,
$$(\z_2\times\z_2)u:=\{(u_1,u_2),\;(u_1,-u_2),\;(-u_1,u_2),\;(-u_1,-u_2)\},$$
is also a solution of \eqref{syst}. 

For $u=(u_1,u_2), v=(v_1,v_2)\in D^{1,2}_0(\Omega)\times D^{1,2}_0(\Omega)$ we set
$$B(u,v):=B_1(u_1,v_1)+B_2(u_2,v_2)$$
with 
$$B_i(u_i,v_i):=\io (\nabla u_i\cdot\nabla v_i - \kappa_i u_iv_i),\quad i=1,2.$$
Our results read as follows.

\begin{theorem} \label{mainthm}
Assume that $p\in (2,2^*)$ and $\kappa_1,\kappa_2\in\r$.
\begin{itemize}
\item[$(i)$] There exists $\Lambda_1>0$ such that for each $\lambda>\Lambda_1$ the system \eqref{syst} has a ground state solution $\bar u$ which is fully nontrivial.
\item[$(ii)$] For each positive integer $k$ there exists $\Lambda_k>0$ such that, if $\lambda> \Lambda_k$, then the system \eqref{syst} has at least $k$ $(\z_2\times\z_2)$-orbits of fully nontrivial solutions. 
\end{itemize}
Each one of these solutions $u$ satisfies 
$$0<B(u,u)<\min\{B_1(\bar w_1,\bar w_1),B_2(\bar w_2,\bar w_2)\},$$
where $\bar w_i$ is a ground state solution to equation \eqref{eq}, $i=1,2$.
\end{theorem}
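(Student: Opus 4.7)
The plan is to reformulate \eqref{syst} variationally on a Nehari-type manifold of fully nontrivial pairs and exploit the fact that, as $\lambda\to\infty$, the cooperative coupling overwhelms the indefinite quadratic part and forces low-energy solutions. Let $X:=D^{1,2}_0(\Omega)\times D^{1,2}_0(\Omega)$ and
$$J_\lambda(u):=\tfrac12 B(u,u)-\tfrac1p\io(\mu_1|u_1|^p+\mu_2|u_2|^p)-\lambda\io|u_1|^\alpha|u_2|^\beta,$$
with the fully-nontrivial Nehari set $\cN_\lambda:=\{u\in X:u_1,u_2\ne 0,\ J_\lambda'(u)(u_1,0)=J_\lambda'(u)(0,u_2)=0\}$. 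The key algebraic observation is that the two Nehari identities \emph{force} $B_i(u_i,u_i)>0$ for $i=1,2$; adding them and using $\alpha+\beta=p$ gives $J_\lambda(u)=\frac{p-2}{2p}B(u,u)$ on $\cN_\lambda$, so the indefiniteness of $B$ is neutralized on the constraint. A preliminary step is a fiber map: for each $(u_1,u_2)$ with $u_i\ne 0$ and $B_i(u_i,u_i)>0$ there is a unique $(s_u,t_u)\in(0,\infty)^2$ with $(s_u u_1,t_u u_2)\in\cN_\lambda$, realizing $\max_{s,t>0}J_\lambda(su_1,tu_2)$. This reduces, after dividing by $s^2,t^2$, to a monotone-dominant 2D algebraic system (thanks to $p>2$), and can be solved by a monotonicity or degree argument; $\cN_\lambda$ is then a $C^1$ natural constraint.

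For part (i), let $c_\lambda:=\inf_{\cN_\lambda}J_\lambda$. A minimizing sequence $(u^n)$ is bounded in $X$: on $\cN_\lambda$ each $\mu_i|u_i^n|_p^p$ and $B_i(u_i^n,u_i^n)$ are controlled by $J_\lambda(u^n)$, and since $\Omega$ is bounded, $|u_i^n|_p\leq C$ forces $|u_i^n|_2\leq C$, whence $\|\nabla u_i^n\|^2\leq B_i(u_i^n,u_i^n)+\kappa_i^+|u_i^n|_2^2\leq C'$. Compactness of $D^{1,2}_0(\Omega)\hookrightarrow L^p(\Omega)$ for $p<2^*$ gives a weak limit $u$ that is a critical point. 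To rule out semitriviality I would show $c_\lambda<\bar c:=\min\{J_1(\bar w_1),J_2(\bar w_2)\}=\tfrac{p-2}{2p}\min\{B_1(\bar w_1,\bar w_1),B_2(\bar w_2,\bar w_2)\}$ for $\lambda>\Lambda_1$: take any $(\bar w_1,\bar w_2)$ with $\io|\bar w_1|^\alpha|\bar w_2|^\beta>0$ and push onto $\cN_\lambda$ via the fiber map; the Nehari equations force $s_\lambda^{2-\alpha}t_\lambda^{-\beta},\;s_\lambda^{-\alpha}t_\lambda^{2-\beta}=O(\lambda)$, hence $(s_\lambda,t_\lambda)\to 0$ and $J_\lambda(s_\lambda\bar w_1,t_\lambda\bar w_2)\to 0$. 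If the weak limit $u$ were semitrivial, say $u_2=0$, then $u_1$ would solve \eqref{eq} with $J_\lambda(u)=J_1(u_1)\geq J_1(\bar w_1)\geq\bar c>c_\lambda$, a contradiction. The inequality on $B(u,u)$ follows from $J_\lambda=\frac{p-2}{2p}B$ on $\cN_\lambda$.

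For part (ii) I would exploit the $G:=\z_2\times\z_2$-equivariance. The group acts on $X$ by sign changes, $J_\lambda$ is $G$-invariant, and $G$ acts \emph{freely} on $\cN_\lambda$ since each non-identity element has only semitrivial fixed points. Apply an equivariant minimax (Krasnoselskii genus via one of the $\z_2$ subgroups, or a $G$-category) to define
$$c_k^\lambda:=\inf_{A\in\Sigma_k}\sup_{u\in A}J_\lambda(u),$$
where $\Sigma_k$ is the class of compact $G$-invariant subsets of $\cN_\lambda$ of genus (or equivariant category) $\geq k$. Standard $G$-equivariant deformation theory yields at least $k$ distinct $G$-orbits of critical points, provided each $c_k^\lambda<\bar c$. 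To arrange this for $\lambda>\Lambda_k$, I would fix a $k$-dimensional subspace $V_k\subset D^{1,2}_0(\Omega)$ and push a compact $G$-invariant test sphere of pairs in $(V_k\setminus\{0\})^2$ onto $\cN_\lambda$ by the fiber map; the scaling estimate from (i), applied \emph{uniformly} on this compact sphere, gives $\sup J_\lambda\to 0$ as $\lambda\to\infty$.

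The main obstacle is the fiber-map analysis under indefiniteness: proving unique solvability of the Nehari system in $(s,t)$ for every pair with both $B_i(u_i,u_i)>0$, smooth dependence on $u$, and naturality of the constraint, with care needed when one of $\alpha,\beta$ lies in $(1,2)$ since the corresponding power $s^{\alpha-2}$ is then decreasing. For (ii) one additionally needs the scaling $(s_\lambda,t_\lambda)\to 0$ to be uniform along a compact family of test pairs, which amounts to a uniform positive lower bound on $\io|u_1|^\alpha|u_2|^\beta$ over that family and ultimately dictates the size of $\Lambda_k$.
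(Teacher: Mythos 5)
Your strategy is the classical componentwise Nehari manifold with fiber maps, and it breaks down exactly in the indefinite case that is the point of the theorem. The central claim that "the indefiniteness of $B$ is neutralized on the constraint" because $B_i(u_i,u_i)>0$ there is misleading: when $\kappa_i\ge\lambda_1(\Omega)$ the condition $B_i(u_i,u_i)>0$ gives no lower bound on $\|u_i\|$, and one can choose $v_i$ with $|v_i|_p=1$ but $B_i(v_i,v_i)=\delta_n\to 0^+$ (mix the negative and positive spectral directions of $-\Delta-\kappa_i$). Projecting such pairs onto your $\cN_\lambda$ forces the scaling parameters to satisfy $s^{p-2}\le \delta_n/\mu_1\to 0$, $t^{p-2}\le\delta_n'/\mu_2\to 0$, and then your own identity $J_\lambda=\frac{p-2}{2p}B(u,u)=\frac{p-2}{2p}(s^2\delta_n+t^2\delta_n')$ shows $\inf_{\cN_\lambda}J_\lambda=0$, not attained. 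So the minimization in part (i) collapses, and the same degeneration destroys the compactness needed for the genus/category minimax in part (ii). (This is precisely why, for a single indefinite equation, Pankov and Szulkin--Weth replace the classical Nehari set by the \emph{generalized} one, maximizing over $\mathbb{R}u\oplus\wt X$ rather than over $\mathbb{R}u$.) A second unresolved gap, which you flag but do not close, is the naturality of the constraint: vanishing of the two Lagrange multipliers requires invertibility of the $2\times2$ fiber Hessian at $(1,1)$, and after substituting the Nehari identities its diagonal entries are $-(p-2)\mu_iA_i-\lambda\gamma(\gamma-2)D$ with $\gamma=\alpha,\beta$; for $\alpha$ or $\beta$ in $(1,2)$ (forced when $N\ge6$) or for large $\lambda$ this matrix need not be negative definite or even invertible, so a constrained minimizer need not be a free critical point.

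The paper circumvents both obstructions by not minimizing on a componentwise Nehari set at all: it applies the Bartolo--Benci--Fortunato linking theorem with $Y=X^+$ (the positive spectral subspace of $B$, of finite codimension) and $Z=Z_m$ the diagonal $\{(w,w):w\in W_m\}$ of an $m$-dimensional eigenspace, checking that $\inf_{\|u\|=\rho,\,u\in X^+}\cJ_\lambda>b>0$ while $\sup_{Z_m}\cJ_\lambda<c_0$ for $\lambda$ large (the coupling term kills the maximum on the finite-dimensional diagonal), together with the Palais--Smale condition. This produces critical points directly in the range $(b,c_0)$, and fully nontrivial ones since any semitrivial solution has energy $\ge c_0$. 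The Pankov generalized Nehari set enters only a posteriori, to show it is closed and bounded away from $\wt X$ so that the infimum of the energy over nontrivial critical points is positive and attained, yielding the ground state. If you want to salvage your plan you would have to replace your constraint by this generalized Nehari set and redo the fiber-map analysis over $(0,\infty)^2\times\wt X$, which is a substantially different (and harder) argument than the one you outline.
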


This result seems to be new also in the positive definite case $\kappa_1,\kappa_2<\lambda_1(\Omega)$, and it holds true in dimensions $N=1$ and 2 as well, for $2<p<\infty$.

\begin{theorem} \label{critical}
Let $p=2^*$ and assume that $\kappa_1,\kappa_2> 0$. If $N\ge 5$, then there exists $\Lambda_1>0$ such that for each $\lambda>\Lambda_1$ the system \eqref{syst} has a ground state solution $\bar u$ which is fully nontrivial. The same conclusion remains valid if $N=4$ and $\kappa_1,\kappa_2$ are not eigenvalues of $-\Delta$ in $D^{1,2}_0(\Omega)$. Moreover,
$$0<B(\bar u,\bar u)<\min\{B_1(\bar w_1,\bar w_1),B_2(\bar w_2,\bar w_2)\},$$
where $\bar w_i$ is a ground state solution to equation \eqref{eq} with $p=2^*$, $i=1,2$.
\end{theorem}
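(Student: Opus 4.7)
The plan is variational: define a Nehari-type set $\cN_\lambda$ of fully nontrivial pairs, show the infimum $c_\lambda:=\inf_{\cN_\lambda}\cJ_\lambda$ of the energy
$$\cJ_\lambda(u)=\tfrac{1}{2}B(u,u)-\tfrac{1}{2^*}\io\bigl(\mu_1|u_1|^{2^*}+\mu_2|u_2|^{2^*}\bigr)-\lambda\io|u_1|^\alpha|u_2|^\beta$$
lies below the first noncompactness level of $\cJ_\lambda$ for $\lambda$ large, and recover compactness there by a Brezis--Nirenberg/Struwe-type analysis. Take $\cN_\lambda$ to be the set of $u=(u_1,u_2)$ with $u_1\neq 0$, $u_2\neq 0$ satisfying
$$B_i(u_i,u_i)=\io\mu_i|u_i|^{2^*}+\lambda\alpha_i\io|u_1|^\alpha|u_2|^\beta,\quad i=1,2,$$
with $\alpha_1=\alpha$, $\alpha_2=\beta$. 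Summing the two identities and substituting into $\cJ_\lambda$ yields the clean formula $\cJ_\lambda(u)=\tfrac{1}{N}B(u,u)$ on $\cN_\lambda$, so in particular $\cJ_\lambda>0$ there. Under the stated dimension and non-eigenvalue hypotheses the scalar problem \eqref{eq} with $p=2^*$ admits a ground state $\bar w_i$ obeying the sharp strict Brezis--Nirenberg inequality $\tfrac{1}{N}B_i(\bar w_i,\bar w_i)<\tfrac{1}{N}\mu_i^{(2-N)/2}S^{N/2}$, where $S$ denotes the best Sobolev constant.

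Next, after cutting off or translating so that $\bar w_1$ and $\bar w_2$ have overlapping supports, I would project $(\bar w_1,\bar w_2)$ onto $\cN_\lambda$ by solving the two Nehari equations for the unique $(t_1(\lambda),t_2(\lambda))\in(0,\infty)^2$ with $(t_1\bar w_1,t_2\bar w_2)\in\cN_\lambda$. A scaling analysis gives $t_i(\lambda)=O(\lambda^{-(N-2)/4})\to 0$ as $\lambda\to\infty$, whence $\cJ_\lambda(t_1\bar w_1,t_2\bar w_2)=\tfrac{1}{N}\sum_i t_i^2 B_i(\bar w_i,\bar w_i)\to 0$. Consequently $c_\lambda<c_\infty:=\tfrac{1}{N}\min\{B_1(\bar w_1,\bar w_1),B_2(\bar w_2,\bar w_2)\}$ for all $\lambda$ sufficiently large. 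Compactness below $c_\infty$ is then proved by a vector-valued Struwe profile decomposition: a PS sequence $(u_n)$ at level $c<c_\infty$ decomposes, up to a subsequence, into a weak limit $u_\infty$ (itself a critical point of $\cJ_\lambda$) plus finitely many concentrating Talenti bubbles in the individual components, each contributing at least $\tfrac{1}{N}\mu_i^{(2-N)/2}S^{N/2}$ to the energy. A semitrivial nonzero $u_\infty$ already satisfies $\cJ_\lambda(u_\infty)\geq c_\infty$, while any bubble strictly exceeds $c_\infty$ by the sharp inequality above; both alternatives contradict $c<c_\infty$, so no bubble forms and $u_n\to u_\infty$ strongly with $u_\infty$ fully nontrivial. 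Ekeland's variational principle on $\cN_\lambda$ then produces a minimizing PS sequence at level $c_\lambda<c_\infty$, whose strong limit $\bar u$ is a fully nontrivial critical point of $\cJ_\lambda$; since every fully nontrivial solution of \eqref{syst} lies in $\cN_\lambda$, $\bar u$ is a ground state, and the required bound $0<B(\bar u,\bar u)<\min\{B_1(\bar w_1,\bar w_1),B_2(\bar w_2,\bar w_2)\}$ follows at once from $\cJ_\lambda(\bar u)=\tfrac{1}{N}B(\bar u,\bar u)=c_\lambda<c_\infty$.

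I expect the main obstacle to be the profile decomposition above. For $\kappa_i>\lambda_1(\Omega)$ the functional $\cJ_\lambda$ is genuinely indefinite, so the coercivity driving the classical Struwe argument must be replaced by a careful analysis of the linking structure of $\cJ_\lambda$ on $\cN_\lambda$. A closely related delicate point, absent from the subcritical setting of Theorem \ref{mainthm}, is the sharp strict Brezis--Nirenberg inequality on the scalar ground state: it is precisely this inequality that makes bubbling too expensive below $c_\infty$. The non-eigenvalue assumption on $\kappa_i$ in dimension $N=4$ enters exactly here, since without it the scalar Brezis--Nirenberg problem may fail to admit a ground state or to satisfy the strict inequality, and the entire threshold argument collapses.
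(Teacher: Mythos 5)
There is a genuine gap, and it sits at the heart of the critical case: your compactness threshold is wrong. A Palais--Smale sequence for the coupled system can lose compactness not only through decoupled scalar Talenti bubbles of cost $\frac1N\mu_i^{(2-N)/2}S^{N/2}$, but through a \emph{coupled} bubble, i.e.\ a concentrating profile solving the limit system \eqref{syst_RN*} in $\rn$. The minimal cost of such a bubble is $\frac1N S_{\infty,\lambda}^{N/2}$, and by \eqref{eq:S_infty0} this is $O(\lambda^{-(N-2)/2})$, hence far \emph{below} your $c_\infty=\frac1N\min_i B_i(\bar w_i,\bar w_i)=c_0$ once $\lambda$ is large. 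So the Palais--Smale condition fails well below $c_\infty$ (the paper's Lemma \ref{below} shows it holds only below $\frac1N S_{\infty,\lambda}^{N/2}$), and your profile decomposition step, which rules out bubbling by comparing only with the scalar levels, is false. Worse, your minimax level obtained by projecting $(\bar w_1,\bar w_2)$ decays like $t_i(\lambda)^2\sim\lambda^{-(N-2)/2}$ --- the \emph{same} rate as the true threshold $\frac1N S_{\infty,\lambda}^{N/2}$ --- so sending $\lambda\to\infty$ cannot decide which of the two is smaller. This is exactly why the paper must work at fixed $\lambda>\Lambda_1$ with test functions $(s_\lambda\psi U_\eps,t_\lambda\psi U_\eps)$ built from the minimizer of $S_{\infty,\lambda}$ itself, and extract the strict gain $-C\eps^2$ (resp.\ $-C\eps^2|\ln\eps|$ for $N=4$) from the terms $\kappa_i\io\bar u_\eps^2$ while controlling the interaction with the finite-dimensional part $w\in\wt X$; the hypotheses $\kappa_i>0$ and, for $N=4$, $\kappa_i\notin\sigma(-\Delta)$ enter precisely in this estimate (the latter gives $-B_i(w_i,w_i)\ge c\|w_i\|^2$, needed to absorb the $O(\eps)$ cross terms), not only in the existence of the scalar ground states.

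A second, independent problem is the variational machinery. Because $B$ is indefinite, the plain Nehari set you define is not a natural constraint: components $u_i$ with $B_i(u_i,u_i)\le 0$ admit no projection, the fibering maps need not have nondegenerate maxima, and it is not clear that $\inf_{\cN_\lambda}\cJ_\lambda>0$ or that Ekeland minimizing sequences on $\cN_\lambda$ are Palais--Smale sequences for the free functional. You flag this as an ``obstacle'' but propose no remedy. The paper sidesteps it entirely: critical points are produced by the Bartolo--Benci--Fortunato linking theorem (Proposition \ref{bbf}) with $Y=X^+$ and $Z=\{tu_\eps+w:t\in\r,\ w\in\wt X\}$, and the generalized Nehari set of Lemma \ref{nehari} is used only a posteriori, to show that the limit of a minimizing sequence of solutions is nontrivial. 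To repair your argument you would need both the correct threshold $\frac1N S_{\infty,\lambda}^{N/2}$ with the attendant $\eps$-expansion, and a replacement for the Nehari/Ekeland step adapted to the indefinite quadratic part.
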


In the positive definite case $0<\kappa_1,\kappa_2<\lambda_1(\Omega)$ Chen and Zou showed that, for $p=2^*$ and $\alpha=\beta$, the system \eqref{syst} has a ground state solution for all $\lambda>0$ if $N\geq 5$ \cite[Theorem 1.3]{cz2} and for $N=4$ if either $0<\lambda<\min\{\mu_1,\mu_2\}$ or $\lambda>\max\{\mu_1,\mu_2\}$ \cite[Theorem 1.3]{cz1}.  However, our result is new also for $0<\kappa_1,\kappa_2<\lambda_1(\Omega)$ when $\alpha\ne \beta$. Multiple positive solutions for $N=4$ were exhibited in \cite{pt} for $\kappa_1=\kappa_2=0$ and small $\lambda>0$ under suitable assumptions on the domain.

The paper is organized as follows: In Section \ref{sec:preliminaries} we introduce the variational setting. Section \ref{sec:subcritical} is devoted to the subcritical case and Section \ref{sec:critical} to the critical case. We conclude with some comments on synchronized solutions in Section \ref{sec:synchronized}. 

\section{Preliminaries} \label{sec:preliminaries}

Let $X:=D^{1,2}_0(\Omega)\times D^{1,2}_0(\Omega)$ with the usual norm
$$\|u\|:=(\|u_1\|^2+\|u_2\|^2)^{1/2},\qquad u=(u_1,u_2)\in X.$$
The solutions of the system \eqref{syst} are the critical points of the functional $\cJ_\lambda: X \to \r$ given by
\begin{align*}
\cJ_\lambda(u) :&= \frac12\io(|\nabla u_1|^2+|\nabla u_2|^2-\kappa_1|u_1|^2-\kappa_2|u_2|^2) \\
& \qquad - \frac1{p}\io(\mu_1|u_1|^{p}+\mu_2|u_2|^{p}) - \lambda \io|u_1|^{\alpha}|u_2|^{\beta} \\
&=\frac12 B(u,u) - \frac1{p}\io(\mu_1|u_1|^{p}+\mu_2|u_2|^{p}) - \lambda \io|u_1|^{\alpha}|u_2|^{\beta}.
\end{align*}
Its partial derivatives are
\begin{align*}
\partial_1\cJ_\lambda(u)v=B_1(u_1,v)-\io \mu_1|u_1|^{p-2}u_1v - \lambda\io\alpha|u_1|^{\alpha-2}|u_2|^\beta u_1v,\\
\partial_2\cJ_\lambda(u)v=B_2(u_2,v)-\io \mu_2|u_2|^{p-2}u_2v - \lambda\io\beta|u_1|^\alpha |u_2|^{\beta-2}u_2 v.
\end{align*}

It is shown in \cite[Theorem 3.1]{sw} that, for $p<2^*$, there exists a minimizer $\bar w_i$ for the energy functional 
$$J_i(w) := \frac12B_i(w,w) - \frac1{p}\io\mu_i|w|^{p}, \qquad i=1,2,$$
on the associated generalized Nehari manifold. The same is true for $p=2^*$ if $\kappa_i>0$ and, either $N\ge 5$, or $N=4$ and $\kappa_i$ is not an eigenvalue of $-\Delta$ in $D^{1,2}_0(\Omega)$ \cite[Theorem 3.6]{sww} (see also \cite{gr} and the references there). Hence, $\bar  w_i$ is a least energy nontrivial solution to the equation \eqref{eq} and $J_i(\bar w_i)>0$. Recall that such solution is called a ground state. Let
\begin{equation} \label{eq:c}
c_0 := \min\{J_1(\bar w_1), \ J_2(\bar w_2)\}.
\end{equation}

\begin{proposition} \label{prop:c}
If $\cJ_\lambda$ has a critical point $u$ such that $0<\cJ_\lambda(u)<c_0$, then $u$ is a fully nontrivial solution of \eqref{syst}.
\end{proposition}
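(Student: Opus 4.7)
The plan is a short argument by contraposition: assume $u=(u_1,u_2)$ is a critical point of $\cJ_\lambda$ that is not fully nontrivial, and derive that $\cJ_\lambda(u)\notin(0,c_0)$. By symmetry between the two components it suffices to treat the case $u_2=0$.

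When $u_2=0$, the second equation of \eqref{syst} is satisfied trivially (both the nonlinear term $\mu_2|u_2|^{p-2}u_2$ and the coupling term $\lambda\beta|u_1|^\alpha|u_2|^{\beta-2}u_2$ vanish, since $\beta>1$), and the first equation reduces to
\[
-\Delta u_1-\kappa_1 u_1=\mu_1|u_1|^{p-2}u_1,
\]
so $u_1\in D^{1,2}_0(\Omega)$ is a solution of \eqref{eq} with $i=1$. Moreover, directly from the definition of $\cJ_\lambda$,
\[
\cJ_\lambda(u)=\tfrac12 B_1(u_1,u_1)-\tfrac1p\io\mu_1|u_1|^p=J_1(u_1).
\]

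Now I split on whether $u_1$ vanishes. If $u_1=0$, then $\cJ_\lambda(u)=0$, contradicting $\cJ_\lambda(u)>0$. If $u_1\neq 0$, then $u_1$ is a nontrivial solution to \eqref{eq}, and since $\bar w_1$ is a ground state for $J_1$ (i.e., a least energy nontrivial solution), we get
\[
\cJ_\lambda(u)=J_1(u_1)\geq J_1(\bar w_1)\geq c_0,
\]
contradicting $\cJ_\lambda(u)<c_0$. The case $u_1=0$ is symmetric and yields $\cJ_\lambda(u)\geq J_2(\bar w_2)\geq c_0$. Hence both components of $u$ must be nonzero, proving the proposition.

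There is no real obstacle here: the only thing to verify carefully is that the coupling term in the equation indexed by the vanishing component disappears (which uses $\beta>1$, respectively $\alpha>1$), so that the surviving component genuinely solves the scalar equation \eqref{eq} and may therefore be compared with the ground state $\bar w_i$ through the definition \eqref{eq:c} of $c_0$.
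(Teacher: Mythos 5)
Your argument is correct and is exactly the paper's (one-line) proof, fleshed out: a semitrivial critical point reduces to a nontrivial solution of the scalar equation \eqref{eq}, whose energy is at least $c_0$ by \eqref{eq:c}, while the trivial critical point has energy $0$. No further comment is needed.
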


\begin{proof}
According to \eqref{eq:c}, $\cJ_\lambda(u)\ge c_0$ for any semitrivial solution $u$ of \eqref{syst}.
\end{proof}

So our goal is to establish the existence of critical points of $\cJ_\lambda$ with critical value smaller than $c_0$. Our main abstract tool will be the following result due to Bartolo, Benci and Fortunato \cite[Theorem 2.4]{babefo}. We write it in a form which is slightly weaker and adapted for our purposes.

\begin{proposition} \label{bbf}
Let $X$ be a Hilbert space and suppose $J\in\cC^1(X,\r)$ is even and $J(0)=0$. Suppose also there exist closed subspaces $Y, Z$ of $X$ and constants $b,c_0,\rho$ such that $Y$ has finite codimension in $X$, $Z$ has finite dimension, $0<b<c_0$, $\rho>0$ and
\[
\inf\{J(u): u\in Y,\ \|u\|=\rho\} > b, \qquad \sup\{J(u): u\in Z\} < c_0.
\]
If $J$ satisfies the Palais-Smale condition at all levels $c\in(b,c_0)$ and if $k = \dim Z -\mathrm{codim}\,Y > 0$, then $J$ has either at least $k$ critical values in $(b,c_0)$ or it has infinitely many critical points in $J^{-1}(b,c_0)$.
\end{proposition}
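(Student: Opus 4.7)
The plan is to prove this via equivariant minimax methods, specifically a pseudo-index scheme based on the Krasnoselskii $\z_2$-genus, as developed by Benci and refined by Bartolo-Benci-Fortunato. Let $m := \mathrm{codim}\,Y$, $n := \dim Z$, and $k := n - m$. The geometric content of the two hypotheses is a linking of order $k$ between the $Y$-sphere $S_Y := \{u \in Y : \|u\|=\rho\}$ and the finite-dimensional subspace $Z$: any odd continuous perturbation of a sufficiently ``large'' symmetric subset of $Z$ must still intersect $S_Y$ on a set whose Krasnoselskii genus is at least $k$, by a Borsuk-Ulam argument exploiting the dimensional gap $\dim Z - \mathrm{codim}\,Y = k$.

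Concretely, I would introduce an equivariant pseudo-index $i^*$ that assigns to each closed symmetric subset $A\subset X\setminus\{0\}$ an integer measuring this linking order, for instance
$$i^*(A) := \inf_{h\in\mathcal{H}} \gamma\bigl(h(A)\cap S_Y\bigr),$$
where $\mathcal{H}$ is a family of admissible $\z_2$-equivariant deformations of $X$ compatible with the negative pseudo-gradient flow of $J$ inside the strip $J^{-1}(b,c_0)$, and $\gamma$ denotes the Krasnoselskii genus. For $j=1,\dots,k$ I would then set
$$c_j := \inf\bigl\{\sup_{u\in A} J(u) : A\subset X\setminus\{0\} \text{ closed symmetric},\ i^*(A)\ge j\bigr\}.$$
The two hypotheses will translate into $b < c_1 \le c_2 \le \cdots \le c_k < c_0$: the lower bound holds since $i^*(A)\ge 1$ forces $A\cap S_Y\ne\emptyset$, giving $\sup_A J \ge \inf_{S_Y} J > b$; the upper bound holds by taking $A$ to be a symmetric compact subset of $Z$ with $i^*(A)\ge k$, whence $\sup_A J \le \sup_Z J < c_0$.

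Next, a $\z_2$-equivariant Deformation Lemma---built from an odd pseudo-gradient flow and using the Palais-Smale condition on $(b,c_0)$---shows that each $c_j$ is a critical value of $J$. Moreover, if $c_j = c_{j+1} = \cdots = c_{j+\ell} =: c$ for some $\ell \ge 1$, the subadditivity of the genus forces the critical set of $J$ at the level $c$ to have genus at least $\ell+1$. A closed symmetric subset of $X\setminus\{0\}$ of genus $\ge 2$ cannot be a finite union of antipodal pairs, and therefore contains infinitely many $\z_2$-orbits. This dichotomy yields exactly the conclusion of the proposition.

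The main obstacle is setting up $i^*$ correctly and verifying its three key properties: monotonicity of $i^*$ under the admissible deformations in $\mathcal{H}$, the Borsuk-Ulam-type lower bound $i^*(A)\ge k$ for an appropriate compact subset of $Z$, and the genus-subadditivity estimate needed to drop $i^*$ by at most $\gamma(K_c)$ when the flow excises a neighborhood of the critical set. Everything else---the odd pseudo-gradient flow, the PS extraction of critical points from minimax values, and the standard minimax template---is routine, and the full argument can be found in \cite[Theorem 2.4]{babefo}.
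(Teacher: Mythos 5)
Your proposal is correct and is essentially the same argument the paper relies on: the paper gives no proof of this proposition but cites \cite[Theorem 2.4]{babefo}, and your pseudo-index/genus scheme (with the deformation restricted to the strip $J^{-1}(b,c_0)$, so that Palais--Smale is only needed there, exactly as the paper's remark requires) is the standard Bartolo--Benci--Fortunato construction underlying that citation. The one step you flag as the ``main obstacle''---the intersection estimate $\gamma\bigl(h(A)\cap S_Y\bigr)\ge \dim Z-\mathrm{codim}\,Y$ for admissible odd $h$---is indeed the heart of the matter, and it is carried out in the cited reference.
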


In \cite{babefo} it is assumed that the Palais-Smale (in fact the weaker Cerami) condition is satisfied at all levels $c>0$; however, as follows from Theorems 1.3 and 2.9 there, it suffices that this holds for $c\in(b,c_0)$.

Let $0<\gamma_1<\gamma_2\leq\cdots$ be the eigenvalues of $-\Delta$ in $D^{1,2}_0(\Omega)$ counted with their multiplicity, and let $e_1,e_2,\ldots$ be the corresponding orthonormal eigenfunctions in $L^2(\Omega)$. These are also the eigenfunctions of the operator $-\Delta-\kappa_i$ in $D^{1,2}_0(\Omega)$ but the eigenvalues are shifted by $-\kappa_i$. 

For $i=1,2$, we set $X_i:=D^{1,2}_0(\Omega)$ and we write $X_i=X_i^+\oplus X_i^0\oplus X_i^-$ for the orthogonal decomposition corresponding to the positive, zero and negative part of the spectrum of $-\Delta-\kappa_i$ in $D^{1,2}_0(\Omega)$. The spaces $\wt X_i:= X_i^0\oplus X_i^-$ are finite-dimensional. Note that $X=X^+\oplus\wt X$, with
$$X^+:=X^+_1\times X^+_2\qquad\text{and}\qquad\wt X:=\wt X_1\times\wt X_2,$$
is an orthogonal decomposition of $X$ and $B$ is positive definite on $X^+$.

\section{The subcritical case} \label{sec:subcritical}

Throughout this section we assume that $p<2^*:=\frac{2N}{N-2}$.

Fix a positive integer $m$, and let $W_m$ be the subspace of $D^{1,2}_0(\Omega)$ generated by $\{e_j:1\leq j\leq m\}$, and set
$$Z_m:=\left\{(w,w):w\in W_m\right\}.$$

\begin{lemma} \label{YZ}
Let $c_0$ be as in \eqref{eq:c}. 
\begin{itemize}
\item[$(i)$] For each fixed $\lambda>0$ there exist $b,\rho>0$, $b<c_0$, such that $\inf\{\cJ_\lambda(u): u\in X^+ \text{ and } \|u\|=\rho\} > b$.
\item[$(ii)$] For each positive integer $m$ there exists $\bar \Lambda_m$ such that, if $\lambda>\bar\Lambda_m$, then $\max_{Z_m}\cJ_\lambda < c_0$. 
\end{itemize}
\end{lemma}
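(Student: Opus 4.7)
For part (i), the plan is to exploit that $B$ is positive definite on $X^+$. Since $\wt X = \wt X_1 \times \wt X_2$ is finite-dimensional, a spectral argument yields a constant $\delta > 0$ (depending only on $\kappa_1,\kappa_2$ and the first eigenvalue of $-\Delta$ above $\max\{\kappa_1,\kappa_2\}$) with $B(u,u) \geq \delta \|u\|^2$ on $X^+$. Because $p < 2^*$, the Sobolev embedding $D^{1,2}_0(\Omega) \hookrightarrow L^p(\Omega)$ gives $\io |u_i|^p \leq C\|u_i\|^p$, and Hölder's inequality then yields $\io |u_1|^\alpha |u_2|^\beta \leq C\|u_1\|^\alpha \|u_2\|^\beta \leq C\|u\|^p$. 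Combining these estimates produces
\[
\cJ_\lambda(u) \geq \tfrac{\delta}{2}\|u\|^2 - C_\lambda \|u\|^p \qquad \text{for every } u \in X^+.
\]
Since $p>2$, the right-hand side is strictly positive for $\|u\|=\rho$ with $\rho>0$ small, and the resulting lower bound $b=b(\rho,\lambda)$ tends to $0$ as $\rho \to 0^+$. Shrinking $\rho$ if necessary gives $b<c_0$.

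For part (ii), I would first substitute $u=(w,w)$ into $\cJ_\lambda$ and use $\alpha+\beta=p$ to obtain the one-field reduction
\[
\cJ_\lambda(w,w) = \io |\nabla w|^2 - \tfrac{\kappa_1+\kappa_2}{2}\io w^2 - \Bigl(\tfrac{\mu_1+\mu_2}{p}+\lambda\Bigr)\io |w|^p.
\]
Parametrising $w = tv$ with $v$ in the unit sphere $S_m := \{v \in W_m : \|v\|=1\}$, this becomes
\[
g(t) := \cJ_\lambda(tv,tv) = A(v)\,t^2 \;-\; (\sigma+\lambda)\,D(v)\,t^p,
\]
with $\sigma := (\mu_1+\mu_2)/p$, $A(v) := 1 - \tfrac{\kappa_1+\kappa_2}{2}\io v^2$, and $D(v) := \io |v|^p>0$. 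If $A(v)\leq 0$, then $g(t) \leq 0$ for all $t \geq 0$. If $A(v) > 0$, optimising over $t>0$ gives
\[
\max_{t\ge 0} g(t) \;=\; C_p \,\frac{A(v)^{p/(p-2)}}{\bigl[(\sigma+\lambda)\,D(v)\bigr]^{2/(p-2)}}
\]
for a constant $C_p>0$ depending only on $p$. Because $W_m$ is finite-dimensional, $S_m$ is compact, so $A$ is bounded above on $S_m$ and $D$ is bounded below by a positive constant there. Hence $\max_{Z_m}\cJ_\lambda = O(\lambda^{-2/(p-2)})$ as $\lambda \to \infty$, and choosing $\bar\Lambda_m$ so that this bound is less than $c_0$ finishes the proof; evenness of $g$ in $t$ handles $t<0$.

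The only subtlety is that once $m$ is large enough for $W_m$ to contain directions in $\wt X_1$ or $\wt X_2$, the quantity $A(v)$ may become non-positive on part of $S_m$; but precisely in that regime $g(t)$ stays non-positive for $t\ge 0$, which is even better than what is required. The essential driver is that $p>2$, so the super-quadratic term carrying the $-\lambda$ coefficient dominates uniformly on the finite-dimensional subspace $W_m$ and forces the supremum down to zero.
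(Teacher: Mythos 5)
Your proof is correct, and for part (i) it coincides with the paper's: the paper simply writes $\cJ_\lambda(u)=\frac12 B(u,u)+o(\|u\|^2)$ as $u\to 0$ in $X^+$ together with the equivalence of $B(\cdot,\cdot)^{1/2}$ and $\|\cdot\|$ on $X^+$, and your Sobolev--H\"older estimate of the $p$-homogeneous terms is exactly what justifies that $o(\|u\|^2)$ and the choice of small $\rho$ and $b<c_0$.

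For part (ii) the reduction is the same as in the paper --- restrict to the diagonal and use $\alpha+\beta=p$ so that the superquadratic term carries the coefficient $\frac{\mu_1+\mu_2}{p}+\lambda$, with the degenerate case (where the quadratic part is already nonpositive on $W_m$) handled separately --- but the final step is genuinely different. The paper takes an interior maximizer $u_\lambda=(w_\lambda,w_\lambda)$ of $\cJ_\lambda$ on $Z_m$, uses the stationarity condition $\cJ'_\lambda(u_\lambda)u_\lambda=0$ to deduce $\io|w_\lambda|^p\le C/\lambda$, and then invokes finite-dimensionality of $W_m$ to conclude $w_\lambda\to0$ and hence $\cJ_\lambda(u_\lambda)\to0$. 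You instead maximize explicitly along each ray $t\mapsto tv$ with $v$ in the unit sphere of $W_m$ and use compactness of that sphere to bound $A$ above and $D$ below by positive constants; this yields the quantitative decay $\max_{Z_m}\cJ_\lambda=O(\lambda^{-2/(p-2)})$, which the paper's soft argument does not make explicit. Both variants rest on the same two ingredients ($p>2$ and $\dim W_m<\infty$) and both are complete; yours buys an explicit rate at the cost of the elementary one-variable optimization computation, while the paper's avoids that computation by differentiating at the maximizer.
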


\begin{proof}
$(i)$ For each $\lambda>0$ we have $\cJ_\lambda(u) = \frac12 B(u,u)+o(\|u\|^2)$ as $u\in X^+$, $u\to 0$.  Note that $B(\cdot,\cdot)^{1/2}$ is a norm in $X^+$ which is equivalent to $\|\cdot\|$. So we can find $b,\rho>0$ as required.

$(ii)$ For $u=(w,w)\in Z_m$,
\begin{align*}
\cJ_\lambda(u)&=\io \left(|\nabla w|^2-\frac{\kappa_1+\kappa_2}{2}w^2\right)-\frac{1}{p}\io(\mu_1+\mu_2)|w|^p-\lambda\io |w|^p \\
&\leq \io \left(|\nabla w|^2-\frac{\kappa_1+\kappa_2}{2}w^2\right)-\frac{1}{p}\io(\mu_1+\mu_2)|w|^p=:J(w).
\end{align*}

If $\gamma_m\leq\frac{\kappa_1+\kappa_2}{2}$, then $\cJ_\lambda(u)\leq 0$ for all $u\in Z_m$ and $(ii)$ is true for $\bar\Lambda_m=0$. 

Let us now assume that $\gamma_m>\frac{\kappa_1+\kappa_2}{2}$. 
Since $W_m$ is finite-dimensional, there exist $R>0$, independent of $\lambda$, such that 
\begin{align*}
\cJ_\lambda(u)\leq J(w)\leq 0\qquad\text{if }w\in W_m\text{\; and \;}\|w\|\geq R,
\end{align*}
and $u_\lambda=(w_\lambda,w_\lambda)$ with $w_\lambda\in W_m$ and $\|w_\lambda\|<R$ such that
$$0<\max_{Z_m}\cJ_\lambda=\cJ_\lambda(u_\lambda).$$
Therefore, 
\begin{align*}
0&=\cJ'_\lambda(u_\lambda)\,u_\lambda\\
&=2\io\left(|\nabla w_\lambda|^2-\frac{\kappa_1+\kappa_2}{2}w_\lambda^2\right) - \io(\mu_1+\mu_2)|w_\lambda|^p-\lambda p\io |w_\lambda|^p.
\end{align*}
It follows that
$$\io |w_\lambda|^p\leq\frac{2}{\lambda p}\io\left(|\nabla w_\lambda|^2-\frac{\kappa_1+\kappa_2}{2}w_\lambda^2\right)\leq\frac{C}{\lambda}$$
for some positive constant $C$, independent of $\lambda$. Hence, $w_\lambda\to 0$ in $L^p(\Omega)$ as $\lambda\to\infty$ and, since $\dim W_m<\infty$, we conclude that $\|w_\lambda\|\to 0$ as $\lambda\to\infty$. This implies that
$$\cJ_\lambda(u_\lambda)\to 0\qquad\text{as \ }\lambda\to\infty,$$
which immediately yields $(ii)$.
\end{proof}

\begin{lemma} \label{ps}
$\cJ_\lambda$ satisfies the Palais-Smale condition in $X$.
\end{lemma}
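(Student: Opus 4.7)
The plan is the standard two-step argument (boundedness, then strong convergence), but the indefiniteness of $B$ forces us to extract information on two different norms simultaneously and then exploit that $\wt X$ is finite-dimensional. Throughout, let $(u_n)\subset X$ be a Palais--Smale sequence at level $c$, decompose $u_n=u_n^++\wt u_n$ according to $X=X^+\oplus\wt X$, and recall that $B$ is equivalent to $\|\cdot\|^2$ on $X^+$ while $|B(\wt u,\wt u)|\le C_\ast\|\wt u\|^2$ on the finite-dimensional space $\wt X$.

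For boundedness I would combine $\cJ_\lambda(u_n)$ and $\cJ_\lambda'(u_n)u_n$ in two different ways. First,
\[
\cJ_\lambda(u_n)-\tfrac{1}{p}\cJ_\lambda'(u_n)u_n
=\left(\tfrac12-\tfrac1p\right)B(u_n,u_n)=c+o(1)+o(\|u_n\|),
\]
which yields $B(u_n,u_n)\le C(1+\|u_n\|)$. Second,
\[
\cJ_\lambda(u_n)-\tfrac{1}{2}\cJ_\lambda'(u_n)u_n
=\left(\tfrac12-\tfrac1p\right)\!\left[\io(\mu_1|u_{1,n}|^{p}+\mu_2|u_{2,n}|^{p})+p\lambda\io|u_{1,n}|^\alpha|u_{2,n}|^\beta\right],
\]
so $\|u_{i,n}\|_{L^p}^p\le C(1+\|u_n\|)$ for $i=1,2$. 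Since $\wt X_i$ is spanned by finitely many smooth eigenfunctions, the $L^2$-orthogonal projection $\wt P_i$ onto $\wt X_i$ is bounded on $L^p$ (write $\wt P_iu=\sum_k\langle u,e_k\rangle_{L^2}e_k$ and use Hölder on each inner product), and on $\wt X_i$ the norms $\|\cdot\|$ and $\|\cdot\|_{L^p}$ are equivalent. Hence $\|\wt u_n\|^p\le C(1+\|u_n\|)$. From the first estimate, $c_+\|u_n^+\|^2\le B(u_n,u_n)+C_\ast\|\wt u_n\|^2\le C(1+\|u_n\|)$ since $2/p<1$. Adding the bounds on $\|u_n^+\|^2$ and $\|\wt u_n\|^2$ gives $\|u_n\|^2\le C(1+\|u_n\|)$, whence $(u_n)$ is bounded.

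For the compactness step, extract a subsequence with $u_n\rh u$ in $X$. Because $p<2^\ast$, the embedding $D^{1,2}_0(\Omega)\hookrightarrow L^p(\Omega)$ is compact, so $u_{i,n}\to u_i$ in $L^p(\Omega)$ and a.e. Testing $\cJ_\lambda'(u_n)-\cJ_\lambda'(u)$ against $u_n-u$, the nonlinear parts vanish by Hölder (e.g. $|\io\mu_1|u_{1,n}|^{p-2}u_{1,n}(u_{1,n}-u_1)|\le\|u_{1,n}\|_{L^p}^{p-1}\|u_{1,n}-u_1\|_{L^p}\to 0$, and analogously for the coupling term), leaving
\[
B(u_n-u,u_n-u)=B(u_n^+-u^+,u_n^+-u^+)+B(\wt u_n-\wt u,\wt u_n-\wt u)\to 0.
\]
Since $\wt X$ is finite-dimensional, $\wt u_n\to\wt u$ strongly, so $B(\wt u_n-\wt u,\wt u_n-\wt u)\to 0$; together with the equivalence of $B(\cdot,\cdot)^{1/2}$ and $\|\cdot\|$ on $X^+$ this forces $u_n^+\to u^+$ strongly, and therefore $u_n\to u$ in $X$.

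The main obstacle is the first step: since $B$ is indefinite, $\cJ_\lambda(u_n)-\tfrac1p\cJ_\lambda'(u_n)u_n$ alone does not control $\|u_n\|$. The essential trick is to use the \emph{second} combination to obtain an $L^p$ bound on the components and then transfer it to a Sobolev bound on $\wt u_n$ via the finite-dimensionality of $\wt X$, which is precisely what closes the estimate.
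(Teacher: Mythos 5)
Your proposal is correct and follows essentially the same route as the paper: control $B(u_n,u_n)$ and the $L^p$-norms via combinations of $\cJ_\lambda(u_n)$ with $\cJ_\lambda'(u_n)u_n$, transfer the $L^p$ bound to a bound on $\|\wt u_n\|$ using the finite-dimensionality of $\wt X$ and $p>2$, and then conclude compactness from the compact embedding into $L^p$ together with the positive definiteness of $B$ on $X^+$. The only (cosmetic) difference is that the paper uses the single combination $\cJ_\lambda(u_n)-\frac1q\cJ_\lambda'(u_n)u_n$ with an intermediate $q\in(2,p)$, which bounds $B(u_n,u_n)$ from above and $\io(\mu_1|u_{n,1}|^p+\mu_2|u_{n,2}|^p)$ simultaneously, whereas you use the two endpoint combinations $q=p$ and $q=2$ separately; both yield exactly the same estimates.
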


\begin{proof}
This is a variant of a well known argument but for the reader's convenience we include it. Let $(u_n)$ be a Palais-Smale sequence for $\cJ_\lambda$ and let $q\in(2,p)$. Then, there exists a constant $C$ such that for almost all $n$
\begin{align*}
C + \|u_n\|  &\ge \cJ_\lambda(u_n) -\frac1q\cJ'_\lambda(u_n)u_n \\ 
&\ge \left(\frac12-\frac1q\right)B(u_n,u_n) + \left(\frac1q-\frac1p\right)\io(\mu_1|u_{n,1}|^{p}+\mu_2|u_{n,2}|^{p}).
\end{align*}
Since $B(\cdot,\cdot)^{1/2}$ and $\|\cdot\|$ are equivalent norms in $X^+$, $\wt X$ is finite-dimensional and $p>2$, we have that $(u_n)$ is bounded in $X$. So, passing to a subsequence, $u_n\rh u$ weakly in $X$,  $u_n\to u$ strongly in $L^2(\Omega)$ and $L^p(\Omega)$, and $\wt u_n\to\wt u$ strongly in $\wt X$, where $u_n=u_n^++\wt u_n$ and $u=u^++\wt u$ with $u_n^+,u^+\in X^+$ and $\wt u_n,\wt u\in\wt X$. Also, it is easy to see that $u$ is a critical point of $\cJ_\lambda$. By the Hölder inequality and the boundedness of $(u_n)$ we have
\begin{align*}
\left|\io(|u_{n,1}|^{p-2}u_{n,1}-|u_1|^{p-2}u_1)(u_{n,1}-u_1)\right| &\le C_1|u_{n,1}-u_1|_p\to 0,\\
 \left|\io(|u_{n,1}|^{\alpha-2}|u_{n,2}|^{\beta}u_{n,1}-|u_1|^{\alpha-2}|u_2|^{\beta}u_1)(u_{n,1}-u_1)\right| &\le C_2|u_{n,1}-u_1|_p\to 0, 
\end{align*}
where $|\cdot|_p$ is the norm in $L^p(\Omega)$. A similar conclusion follows with the roles of $u_1$ and $u_2$ interchanged. Hence,
\begin{align*}
o(1) &= (\cJ'_\lambda(u_n)-\cJ'_\lambda(u))[u_n-u] \\
&= B(u_n-u,\,u_n-u) + o(1)=B(u_n^+-u^+,\,u_n^+-u^+) + o(1).
\end{align*}
It follows that $u_n^+\to u^+$ in $X$. Thus, $u_n\to u$ in $X$, as claimed.
\end{proof}

Let 
\[
\cN_\lambda := \{u\in X\smallsetminus \wt X: \cJ'_\lambda(u)[tu+v]=0 \text{ for all } t\in\r,\ v\in \wt X\}.
\]
This set has been introduced by Pankov in \cite{pa} (see also \cite{sw}) and it is called the generalized Nehari manifold. However, we do not know whether it is a manifold under our present assumptions. Note that all nontrivial solutions to \eqref{syst} are necessarily contained in $\cN_\lambda$. Note also that, if $u\in\cN_\lambda$, then
\begin{equation} \label{>0}
\cJ_\lambda(u) = \cJ_\lambda(u)-\frac12\cJ'_\lambda(u)u \ge \left(\frac12-\frac1p\right)\io(\mu_1|u_1|^{p}+\mu_2|u_2|^{p}) > 0.
\end{equation} 

\begin{lemma} \label{nehari}
The set $\cN_\lambda$ is closed in $X$ and bounded away from $\wt X$. 
\end{lemma}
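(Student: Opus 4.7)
The plan is to prove the bounded-away assertion first, since closedness will follow easily from it. Let $u\in\cN_\lambda$. Taking $(t,v)=(1,0)$ and $(t,v)=(0,\wt u)$ in the defining relations gives $\cJ'_\lambda(u)u=0$ and $\cJ'_\lambda(u)\wt u=0$, so by linearity $\cJ'_\lambda(u)u^+=0$. Because the spectral decomposition $X=X^+\oplus\wt X$ is orthogonal with respect to $B$ as well as with respect to the inner product, we have $B(u,u^+)=B(u^+,u^+)$, and writing out $\cJ'_\lambda(u)u^+=0$ explicitly yields
\begin{equation*}
B(u^+,u^+)=\io\mu_1|u_1|^{p-2}u_1u_1^+ + \io\mu_2|u_2|^{p-2}u_2u_2^+ + \lambda\alpha\io|u_1|^{\alpha-2}|u_2|^\beta u_1u_1^+ + \lambda\beta\io|u_1|^\alpha|u_2|^{\beta-2}u_2u_2^+.
\end{equation*}

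The left side dominates $c\|u^+\|^2$, since $B(\cdot,\cdot)^{1/2}$ and $\|\cdot\|$ are equivalent on $X^+$. On the right, Hölder's inequality (with exponents dictated by $\alpha+\beta=p$, so that e.g.\ $\io|u_1|^{\alpha-1}|u_2|^\beta|u_1^+|\le|u_1|_p^{\alpha-1}|u_2|_p^\beta|u_1^+|_p$) together with the Sobolev embedding $D^{1,2}_0(\Omega)\hookrightarrow L^p(\Omega)$ bounds each term by $C(1+\lambda)\|u\|^{p-1}\|u^+\|$, yielding the preliminary estimate
\begin{equation*}
c\|u^+\|\le C(1+\lambda)\|u\|^{p-1}.\qquad(\ast)
\end{equation*}
To convert $(\ast)$ into a lower bound on $\|u^+\|$, I need to control $\|u\|$ in terms of $\|u^+\|$. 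From $\cJ'_\lambda(u)u=0$ and $B(\wt u,\wt u)\le 0$ one has
\begin{equation*}
\sum_{i=1}^2\mu_i|u_i|_p^p + \lambda p\io|u_1|^\alpha|u_2|^\beta = B(u,u) \le B(u^+,u^+) \le C\|u^+\|^2,
\end{equation*}
so $|u_i|_p\le C'\|u^+\|^{2/p}$. Since $\wt X$ is finite-dimensional, $|\cdot|_p$ and $\|\cdot\|$ are equivalent there, and $\wt u_i=u_i-u_i^+$ gives $\|\wt u\|\le C''(\|u^+\|^{2/p}+\|u^+\|)$, hence $\|u\|\le K(\|u^+\|^{2/p}+\|u^+\|)$.

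Substituting into $(\ast)$: for $\|u^+\|\le 1$ one has $\|u\|\le 2K\|u^+\|^{2/p}$, so $c\|u^+\|\le C(1+\lambda)(2K)^{p-1}\|u^+\|^{2(p-1)/p}$; since $p>2$ implies $2(p-1)/p>1$, this forces $\|u^+\|\ge\delta(\lambda)>0$. The case $\|u^+\|>1$ is trivial. Closedness now follows from continuity of $\cJ'_\lambda$: if $u_n\in\cN_\lambda$ with $u_n\to u$ in $X$, the identities $\cJ'_\lambda(u_n)[tu_n+v]=0$ pass to the limit for all $t\in\r$, $v\in\wt X$, and $\|u^+\|=\lim\|u_n^+\|\ge\delta(\lambda)$ ensures $u\notin\wt X$, so $u\in\cN_\lambda$. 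The main technical obstacle is the exponent chase in the two-step bound: $(\ast)$ alone is insufficient, and the independent control of $\|u\|$ coming from $\cJ'_\lambda(u)u=0$ must be combined with it carefully; everything else is routine Hölder/Sobolev.
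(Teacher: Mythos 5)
Your proof is correct, and it takes a genuinely different route from the paper's. The paper argues by contradiction: it takes a sequence $(u_n)\subset\cN_\lambda$ with $u_n^+\to 0$, first deduces $u_n\to 0$ from $\cJ'_\lambda(u_n)u_n=0$, then derives $\|u_n^+\|^2+\|u_n^-\|^2\le \overline C\,\|u_n\|^p$ from $\cJ'_\lambda(u_n)u_n^\pm=0$. Since that inequality says nothing about the kernel component, the paper must distinguish the cases $X^0=\{0\}$ (where the inequality alone gives $\|u_n\|^{p-2}\ge \overline C^{-1}$, contradicting $u_n\to0$) and $X^0\ne\{0\}$ (where one normalizes $v_n=u_n/\|u_n\|$ and passes to the limit in $\partial_1\cJ_\lambda(u_n)u_{n,1}^0=0$ to reach a contradiction). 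Your argument is direct and quantitative: the identity $\cJ'_\lambda(u)u=0$ together with $B(\wt u,\wt u)\le 0$ controls the full $L^p$-norms $|u_i|_p$ by $\|u^+\|^{2/p}$, and the finite-dimensionality of $\wt X$ (which already contains $X^0$) upgrades this to $\|\wt u\|\le C(\|u^+\|^{2/p}+\|u^+\|)$, so the kernel needs no separate treatment. Feeding this into the estimate $\|u^+\|\le C(1+\lambda)\|u\|^{p-1}$ obtained from $\cJ'_\lambda(u)u^+=0$ and using $2(p-1)/p>1$ (equivalently $p>2$) gives an explicit lower bound $\|u^+\|\ge\delta(\lambda)>0$ on all of $\cN_\lambda$; note that $u^+\ne0$ for $u\in\cN_\lambda$, so the division by $\|u^+\|$ is legitimate. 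What your approach buys is a case-free handling of the kernel and an explicit constant; the paper's sequential argument is a little shorter to write but requires the dichotomy on $X^0$. Your deduction of closedness (continuity of $\cJ'_\lambda$ plus the uniform lower bound on $\|u^+\|$, which keeps the limit out of $\wt X$) is the same as the paper's concluding step.
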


\begin{proof}
Arguing by contradiction, assume there exists a sequence $(u_n)$ in $\cN_\lambda$ such that $u_n^+\to 0$. Write $u_n=u_n^++u_n^0+u_n^-$ with $u_n^\pm\in X_1^\pm\times X_2^\pm=:X^\pm$ and $u_n^0\in X_1^0\times X_2^0=:X^0$. Since $B(u_n^+,u_n^+)\to 0$ and $B(u_n^-,u_n^-)\le 0$, the inequality
\[
0 = \cJ'_\lambda(u_n)u_n \le B(u_n^+,u_n^+)+B(u_n^-,u_n^-) - \io(\mu_1|u_{n,1}|^p+\mu_2|u_{n,2}|^p)
\]
implies that $u_n^-\to 0$ and $u_n^0\to 0$. So $u_n\to 0$.
As $\cJ'_\lambda(u_n)u_n = \cJ'_\lambda(u_n)u_n^0 = \cJ'_\lambda(u_n)u_n^- =0$, also $\cJ'_\lambda(u_n)u_n^+=0$ and therefore, using the Hölder and the Sobolev inequalities, we have
\begin{align*}
B_1(u_{n,1}^+,u_{n,1}^+) &= \io \mu_1|u_{n,1}|^{p-2}u_{n,1}u_{n,1}^+ + \lambda \alpha\io|u_{n,1}|^{\alpha-2}|u_{n,2}|^{\beta}u_{n,1}u_{n,1}^+ \\
&\le C_1\|u_n\|^p.
\end{align*}
Hence, $\|u_{n,1}^+\|^2 \le C\|u_n\|^p$ and, similarly, $\|u_{n,2}^+\|^2, \|u_{n,1}^-\|^2, \|u_{n,2}^-\|^2 \le C\|u_n\|^p$ for some $C>0$. Therefore,
\begin{equation} \label{ineq}
\|u_n^+\|^2+\|u_n^-\|^2 \le \overline C \|u_n\|^p.
\end{equation}
If $X^0=\{0\}$, then inequality \eqref{ineq} implies $\|u_n\|^{p-2}\ge \overline C^{-1}$. This is a contradiction. If $X^0\neq\{0\}$, we set $v_n := \frac{u_n}{\|u_n\|}$. By \eqref{ineq}, $v_n^\pm\to 0$, so $v_n^0\to v^0\ne 0$ and we may assume $v_1^0\ne 0$. Since $\cJ'_\lambda(u_n)u_n^0=0$, dividing $\partial_1\cJ_\lambda(u_n)\,u_{n,1}^0$ by $\|u_n\|^p$ we obtain
\[
0 = \io \mu_1|v_{n,1}|^{p-2}v_{n,1}v_{n,1}^0 + \lambda\alpha\io|v_{n,1}|^{\alpha-2}|v_{n,2}|^{\beta}v_{n,1}v_{n,1}^0.
\]
So passing to the limit as $n\to\infty$, we get
\[
0 = \io \mu_1|v_1^0|^p + \lambda\alpha\io|v_1^0|^{\alpha}|v_2^0|^{\beta}\geq \io \mu_1|v_1^0|^p,
\]
a contradiction. We have shown that $\cN_\lambda$ is bounded away from $\wt X$ and it follows immediately that $\cN_\lambda$ is closed. 
\end{proof}

\begin{proof} [Proof of Theorem \ref{mainthm}]
We show $(ii)$ first. Given $k\geq 1$, set $m:=k+\mathrm{codim}\,X^+$ and define $\Lambda_k:=\bar\Lambda_m$ as in Lemma \ref{YZ}$(ii)$. Set $Y:=X^+$ and $Z:=Z_m$. Let $\lambda>\Lambda_k$, and for this $\lambda$, choose $b,\rho>0$, $b<c_0$, as in Lemma \ref{YZ}$(i)$. Since $\cJ_\lambda$ is $(\z_2\times\z_2)$-invariant and satisfies the Palais-Smale condition, it follows from Proposition \ref{bbf} that the system \eqref{syst} has at least $k$ $(\z_2\times\z_2)$-orbits of nontrivial solutions $u_j$ such that $\cJ_\lambda(u_j)\in (b,c_0)$. According to Proposition \ref{prop:c}, $u_j$ are fully nontrivial.

To show $(i)$, let $\lambda>\Lambda_1$ and let $(u_n)$ be a sequence of nontrivial solutions to \eqref{syst} such that $\cJ_\lambda(u_n)\to \inf\{\cJ_\lambda(u): u\ne 0 \text{ and } \cJ'_\lambda(u)=0\}<c_0$. This is a Palais-Smale sequence for $\cJ_\lambda$. Hence, after passing to a subsequence,  $u_n\to \bar u$ in $X$. According to Lemma \ref{nehari}, $\bar u\in\cN_\lambda$. Hence, $\bar u$ is a nontrivial least energy solution to the system \eqref{syst} and $\cJ_\lambda(\bar u)>0$, as shown in \eqref{>0}. Since $\cJ_\lambda(\bar u)<c_0$, $\bar u$ is fully nontrivial. 

The last statement of the theorem follows because for each solution $u$ obtained above we have
\[
c_0 > \cJ_\lambda(u) = \cJ_\lambda(u) - \frac1p \cJ_\lambda'(u)u = \left(\frac12-\frac1p\right)B(u,u),
\]
while $\bar w_i$ satisfy
\[
c_0\le J_i(\bar w_i) =  J_i(\bar w_i) - \frac1p J_i'(\bar w_i)\bar w_i = \left(\frac12-\frac1p\right)B_i(\bar w_i,\bar w_i).
\] 
This finishes the proof.
\end{proof}

\section{The critical case} \label{sec:critical}

We begin by studying the limit system in $\rn$,
\begin{equation}
\begin{cases} \label{syst_RN*}
-\Delta u_1 = \mu_1|u_1|^{2^*-2}u_1 + \lambda\alpha|u_1|^{\alpha-2}|u_2|^\beta u_1, \\
-\Delta u_2 = \mu_2|u_2|^{2^*-2}u_2 + \lambda\beta|u_1|^\alpha|u_2|^{\beta-2}u_2, \\
u_1,u_2\in D^{1,2}(\rn).
\end{cases}
\end{equation}
We write 
\begin{align*}
\cI_{\infty,\lambda}(u) :&= \frac{1}{2}(\|u_1\|^2+\|u_2\|^2)-\frac1{2^*}(\mu_1|u_1|_{2^*}^{2^*}+\mu_2|u_2|_{2^*}^{2^*}) - \lambda \io|u_1|^{\alpha}|u_2|^{\beta}, \\
\cM_{\infty,\lambda} :&= \{u\in D^{1,2}(\rn)\times D^{1,2}(\rn):u\ne (0,0),\;\cJ'_{\infty,\lambda}(u)u=0\},
\end{align*}
for the functional and the Nehari manifold associated to \eqref{syst_RN*}, where $\|\cdot\|$ and $|\cdot|_{2^*}$ are the usual norms in $D^{1,2}(\rn)$ and $L^{2^*}(\rn)$. Note that $\cM_{\infty,\lambda}$ contains all nontrivial solutions to \eqref{syst_RN*}, also the semitrivial ones. Then,
$$\inf_{u\in\cM_{\infty,\lambda}}\cI_{\infty,\lambda}(u)=\frac{1}{N}S_{\infty,\lambda}^{N/2},$$
where
$$S_{\infty,\lambda}:=\inf_{\substack{u_1,u_2\in D^{1,2}(\rn)\\ (u_1,u_2)\neq (0,0)}}\frac{\|u_1\|^2 + \|u_2\|^2}{\left(\int_{\mathbb{R}^N}\left(\mu_1|u_1|^{2^*}+\mu_2|u_2|^{2^*} + 2^*\lambda |u_1|^{\alpha}|u_2|^{\beta}\right)\right)^{2/2^*}}.$$
To estimate $S_{\infty,\lambda}$ from above we take $u_1:=sU_\varepsilon$ and $u_2:=tU_\varepsilon$ with $s,t\geq 0$, where
\[
U_\eps(x) := [N(N-2)]^\frac{N-2}{4}\left(\frac{\eps}{\eps^2+|x|^2}\right)^\frac{N-2}{2}.
\] 
As $\|U_\eps\|^2=S^{N/2}=|U_\eps|_{2^*}^{2^*}$ for any $\varepsilon>0$, we have that
\begin{align}
S_{\infty,\lambda} &\le \inf_{\substack{s,t\geq 0\\(s,t)\ne (0,0)}}\frac{s^2 + t^2}{(\mu_1 s^{2^*}+\mu_2 t^{2^*} + 2^*\lambda s^{\alpha}t^{\beta})^{2/2^*}}S \label{eq:S_infty0} \\ 
&\leq \frac{2}{(\mu_1+\mu_2+2^*\lambda)^{2/2^*}}S, \nonumber
\end{align}
where $S$ is the best constant for the embedding $D^{1,2}(\rn)\hookrightarrow L^{2^*}(\rn)$. Using the second inequality in \eqref{eq:S_infty0} we see that there exists $\Lambda_0>0$ such that, for any $\lambda>\Lambda_0$,
\begin{align*}
\min\left\{\mu_1^{-2/2^*},\mu_2^{-2/2^*}\right\}S &>\inf_{\substack{s,t\geq 0\\(s,t)\ne (0,0)}}\frac{s^2 + t^2}{(\mu_1 s^{2^*}+\mu_2 t^{2^*} + 2^*\lambda s^{\alpha}t^{\beta})^{2/2^*}}S \\
&=\inf_{s,t>0}\frac{s^2 + t^2}{(\mu_1 s^{2^*}+\mu_2 t^{2^*} + 2^*\lambda s^{\alpha}t^{\beta})^{2/2^*}}S\geq S_{\infty,\lambda}. 
\end{align*}

\begin{proposition} \label{prop:lambda}
For any $\lambda>\Lambda_0$, one has 
\begin{align} 
0<S_{\infty,\lambda}=\inf_{s,t>0}\frac{s^2 + t^2}{(\mu_1 s^{2^*}+\mu_2 t^{2^*} + 2^*\lambda s^{\alpha}t^{\beta})^{2/2^*}}S, \label{eq:S_infty}
\end{align}
and there exist $s_\lambda,t_\lambda>0$ such that $(s_\lambda  U_\varepsilon,t_\lambda U_\varepsilon)$ solves \eqref{syst_RN*} and
\begin{equation} \label{stlambda}
\cI_{\infty,\lambda}(s_\lambda  U_\varepsilon,t_\lambda U_\varepsilon)=\inf_{u\in\cM_{\infty,\lambda}}\cI_{\infty,\lambda}(u)=\frac{1}{N}S_{\infty,\lambda}^{N/2}
\end{equation}
for every $\varepsilon>0$. 
\end{proposition}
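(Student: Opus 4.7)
The plan is to derive a matching lower bound for $S_{\infty,\lambda}$ via a Sobolev--Hölder reduction, then to attain and rescale the resulting scalar infimum to produce a solution of \eqref{syst_RN*} of the asserted form.

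For the lower bound, take any $(u_1,u_2)\ne(0,0)$ in $D^{1,2}(\rn)\times D^{1,2}(\rn)$. The Sobolev inequality gives $\|u_i\|^2\ge S|u_i|_{2^*}^2$ for $i=1,2$, and since $\alpha+\beta=2^*$ the Hölder inequality with exponents $2^*/\alpha$ and $2^*/\beta$ yields $\irn|u_1|^\alpha|u_2|^\beta\le|u_1|_{2^*}^\alpha|u_2|_{2^*}^\beta$. Writing $s:=|u_1|_{2^*}$, $t:=|u_2|_{2^*}$ and
$$f(s,t):=\frac{s^2+t^2}{(\mu_1 s^{2^*}+\mu_2 t^{2^*}+2^*\lambda s^\alpha t^\beta)^{2/2^*}},$$
the Rayleigh quotient defining $S_{\infty,\lambda}$ is bounded below by $Sf(s,t)$, so $S_{\infty,\lambda}\ge S\inf_{s,t\ge0,\,(s,t)\ne(0,0)}f(s,t)$. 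Combined with the reverse inequality already recorded in \eqref{eq:S_infty0} (obtained from the extremals $(sU_\varepsilon,tU_\varepsilon)$, which saturate both Sobolev and Hölder), this gives \eqref{eq:S_infty}.

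Since $f$ is $0$-homogeneous, I restrict to the compact quarter-circle $\{(s,t):s,t\ge 0,\ s^2+t^2=1\}$, on which the continuous function $f$ attains its minimum at some $(\tilde s,\tilde t)$. The boundary values are $f(1,0)=\mu_1^{-2/2^*}$ and $f(0,1)=\mu_2^{-2/2^*}$; because the estimate preceding the proposition shows $\inf f<\min\{\mu_1^{-2/2^*},\mu_2^{-2/2^*}\}$ for $\lambda>\Lambda_0$, the minimizer must lie in the interior, $\tilde s,\tilde t>0$, and in particular $S_{\infty,\lambda}>0$. At this interior minimizer the Euler--Lagrange equations for $f$ simplify to
$$\tilde s=\kappa\bigl(\mu_1\tilde s^{2^*-1}+\lambda\alpha\tilde s^{\alpha-1}\tilde t^\beta\bigr),\qquad \tilde t=\kappa\bigl(\mu_2\tilde t^{2^*-1}+\lambda\beta\tilde s^\alpha\tilde t^{\beta-1}\bigr),$$
with the common proportionality constant $\kappa:=(\tilde s^2+\tilde t^2)/(\mu_1\tilde s^{2^*}+\mu_2\tilde t^{2^*}+2^*\lambda\tilde s^\alpha\tilde t^\beta)>0$. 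Setting $r:=\kappa^{1/(2^*-2)}$ and $(s_\lambda,t_\lambda):=(r\tilde s,r\tilde t)$, the $(2^*-1)$-homogeneity of the right-hand sides together with $-\Delta U_\varepsilon=U_\varepsilon^{2^*-1}$ in $\rn$ shows that $(s_\lambda U_\varepsilon,t_\lambda U_\varepsilon)$ solves \eqref{syst_RN*} for every $\varepsilon>0$.

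For the energy identity, since $(s_\lambda U_\varepsilon,t_\lambda U_\varepsilon)\in\cM_{\infty,\lambda}$, the standard manipulation $\cI_{\infty,\lambda}(u)=\cI_{\infty,\lambda}(u)-\frac1{2^*}\cI'_{\infty,\lambda}(u)u$ yields $\cI_{\infty,\lambda}(s_\lambda U_\varepsilon,t_\lambda U_\varepsilon)=(s_\lambda^2+t_\lambda^2)S^{N/2}/N$, while substituting the Nehari relation $s_\lambda^2+t_\lambda^2=\mu_1 s_\lambda^{2^*}+\mu_2 t_\lambda^{2^*}+2^*\lambda s_\lambda^\alpha t_\lambda^\beta$ into the denominator of $f(s_\lambda,t_\lambda)$ gives $S_{\infty,\lambda}=Sf(s_\lambda,t_\lambda)=S(s_\lambda^2+t_\lambda^2)^{2/N}$. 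These combine to $\cI_{\infty,\lambda}(s_\lambda U_\varepsilon,t_\lambda U_\varepsilon)=S_{\infty,\lambda}^{N/2}/N$; applying the same manipulation to an arbitrary $u\in\cM_{\infty,\lambda}$ together with the definition of $S_{\infty,\lambda}$ shows $\cI_{\infty,\lambda}(u)\ge S_{\infty,\lambda}^{N/2}/N$, yielding the infimum assertion in \eqref{stlambda}. The principal obstacle is the attainment step: ensuring the scalar minimizer has both coordinates strictly positive, which is precisely what the threshold $\Lambda_0$ is designed to enforce. Everything else is a standard Sobolev--Hölder chain and the routine algebra of $0$-homogeneous functions.
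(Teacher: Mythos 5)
Your proof is correct and follows the same overall strategy as the paper (reduce to the scalar minimization of $f$, use the threshold $\Lambda_0$ to force an interior minimizer, then rescale onto the Nehari manifold), but one step is handled by a genuinely different and arguably cleaner route. For the inequality $S_{\infty,\lambda}\ge S\inf f$, the paper takes a minimizing sequence $(u_n)\subset\cM_{\infty,\lambda}$ for $\cI_{\infty,\lambda}$ and passes to the limit in the quotient evaluated at $(|u_{n,1}|_{2^*},|u_{n,2}|_{2^*})$; you instead apply Sobolev to the numerator and H\"older (with exponents $2^*/\alpha$, $2^*/\beta$) to the denominator of the Rayleigh quotient for an \emph{arbitrary} pair $(u_1,u_2)$, which gives the same bound pointwise without invoking minimizing sequences or the a priori identity $\inf_{\cM_{\infty,\lambda}}\cI_{\infty,\lambda}=\frac1N S_{\infty,\lambda}^{N/2}$. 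A pleasant by-product of your route is that $S_{\infty,\lambda}>0$ falls out of the compactness of the quarter circle, whereas the paper cites \cite[Lemma 3.3]{cf} for this. You also make explicit, via the Euler--Lagrange equations for $f$ and the rescaling $r=\kappa^{1/(2^*-2)}$, that $(s_\lambda U_\eps,t_\lambda U_\eps)$ actually solves \eqref{syst_RN*}; the paper leaves this implicit in the phrase ``let $t_\lambda>0$ be such that $t_\lambda(r_\lambda U_\eps,U_\eps)\in\cM_{\infty,\lambda}$'' (and works with the single variable $r=s/t$ rather than the quarter circle, an equivalent normalization). Both approaches are complete; yours is more self-contained, the paper's recycles the Nehari-manifold machinery it needs anyway for Lemma \ref{below}.
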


\begin{proof}
The inequality $S_{\infty,\lambda}>0$ was proved in \cite[Lemma 3.3]{cf}. To prove the equality in \eqref{eq:S_infty} we take $u_n=(u_{n,1},u_{n,2})\in\cM_{\infty,\lambda}$ such that 
\begin{align*}
N\cI_{\infty,\lambda}(u_n)&=\|u_{n,1}\|^2+\|u_{n,2}\|^2\\
& = \mu_1|u_{n,1}|_{2^*}^{2^*}+\mu_2|u_{n,2}|_{2^*}^{2^*}+2^*\lambda|u_{n,1}|_{2^*}^\alpha |u_{n,2}|_{2^*}^\beta \to S_{\infty,\lambda}^{N/2}.
\end{align*}
Since
\begin{align*}
S(|u_{n,1}|_{2^*}^2+|u_{n,2}|_{2^*}^2) &\le \|u_{n,1}\|^2+\|u_{n,2}\|^2 \\
&=\mu_1|u_{n,1}|_{2^*}^{2^*}+\mu_2|u_{n,2}|_{2^*}^{2^*}+2^*\lambda\int_{\rn} |u_{n,1}|^\alpha |u_{n,2}|^\beta \\
&\leq \mu_1|u_{n,1}|_{2^*}^{2^*}+\mu_2|u_{n,2}|_{2^*}^{2^*}+2^*\lambda|u_{n,1}|_{2^*}^\alpha |u_{n,2}|_{2^*}^\beta,
\end{align*}
we have that
\begin{align*}
\frac{|u_{n,1}|_{2^*}^2+|u_{n,2}|_{2^*}^2}{(\mu_1|u_{n,1}|_{2^*}^{2^*}+\mu_2|u_{n,2}|_{2^*}^{2^*}+2^*\lambda|u_{n,1}|_{2^*}^\alpha |u_{n,2}|_{2^*}^\beta)^{2/2^*}}S &\le (N\cI_{\infty,\lambda}(u_n))^{2/N} \\
&= S_{\infty,\lambda}+o(1).
\end{align*}
Taking $s=|u_{n,1}|_{2^*}$, $t=|u_{n,2}|_{2^*}$  we see that this inequality, together with \eqref{eq:S_infty0}, yields the equality in \eqref{eq:S_infty}.

Setting $r=\frac{s}{t}$ we obtain
$$S_{\infty,\lambda}=\inf_{r>0}\frac{r^2 + 1}{(\mu_1 r^{2^*}+\mu_2 + 2^*\lambda r^{\alpha})^{2/2^*}}S.$$
The function
$$f_\lambda(r):=\frac{r^2 + 1}{(\mu_1 r^{2^*}+\mu_2 + 2^*\lambda r^{\alpha})^{2/2^*}}$$
satisfies $f_\lambda(0)=\mu_2^{-2/2^*}$ and\;   $\lim_{r\to\infty}f_\lambda(r)=\mu_1^{-2/2^*}$. Therefore, as 
$$\inf_{r>0}f_\lambda(r)<\min\{\mu_1^{-2/2^*},\mu_2^{-2/2^*}\}\qquad\text{for all }\lambda>\Lambda_0,$$
there exists $r_\lambda\in (0,\infty)$ such that $f_\lambda(r_\lambda)=\inf_{r>0}f_\lambda(r)$. Let $t_\lambda>0$ be such that $t_\lambda(r_\lambda U_\eps, U_\eps)\in \cM_{\infty,\lambda}$ and set $s_\lambda:=r_\lambda t_\lambda$. Then
$$\cI_{\infty,\lambda}(s_\lambda  U_\varepsilon,t_\lambda U_\varepsilon))=\frac{1}{N}S_{\infty,\lambda}^{N/2}=\inf_{u\in\cM_{\infty,\lambda}}\cI_{\infty,\lambda}(u),$$
as claimed.
\end{proof}

\begin{remark}
\emph{If $\alpha=\beta=2^*/2$, Chen and Zou showed that $\inf_{\cM_{\infty,\lambda}}\cI_{\infty,\lambda}$ is attained for all $N\geq 5$ \cite[Theorem 1.6]{cz2} and for $N=4$ if either $0<\lambda<\min\{\mu_1,\mu_2\}$ or $\lambda>\max\{\mu_1,\mu_2\}$ \cite[Theorem 1.5]{cz1}. Results for more general $\alpha,\beta$ and $\lambda=2^*$ may be found in \cite{ppw}.}
\end{remark}

Next we turn our attention to the critical system
\begin{equation} \label{syst*}
\begin{cases}
-\Delta u_1 - \kappa_1u_1 = \mu_1|u_1|^{2^*-2}u_1 + \lambda\alpha|u_1|^{\alpha-2}|u_2|^\beta u_1, \\
-\Delta u_2 - \kappa_2u_2 = \mu_2|u_2|^{2^*-2}u_2 + \lambda\beta|u_1|^\alpha|u_2|^{\beta-2}u_2, \\
u_1,u_2\in D^{1,2}_0(\Omega),
\end{cases}
\end{equation}
in a bounded domain $\Omega$. Recall the notation introduced in Section \ref{sec:preliminaries}, where now $p=2^*$.

By \eqref{eq:S_infty0} we may choose $\Lambda_1\geq\Lambda_0$ such that $\frac1N S_{\infty,\lambda}^{N/2}<c_0$ for $\lambda>\Lambda_1$. Recall  from Proposition \ref{prop:c} that, if $u$ is a critical point of $\cJ_\lambda$ and $0<\cJ_\lambda(u)<c_0$, then $u$ is fully nontrivial. 

\begin{lemma} \label{below}
$\cJ_\lambda$ satisfies the Palais-Smale condition below the level $\frac1N S_{\infty,\lambda}^{N/2}$.
\end{lemma}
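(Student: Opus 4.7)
The plan is to adapt the classical Brezis-Nirenberg-Struwe concentration-compactness argument, working in the indefinite setting via the splitting $X=X^+\oplus\wt X$ introduced in Section \ref{sec:preliminaries}. Let $(u_n)\subset X$ be a Palais-Smale sequence for $\cJ_\lambda$ at a level $c<\frac1N S_{\infty,\lambda}^{N/2}$.

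First I would establish boundedness of $(u_n)$. Picking $q\in(2,2^*)$ and combining $\cJ_\lambda(u_n)=c+o(1)$ with $\cJ'_\lambda(u_n)u_n=o(\|u_n\|)$ one obtains
\[
C(1+\|u_n\|)\ge\left(\tfrac12-\tfrac1q\right)B(u_n,u_n)+\left(\tfrac1q-\tfrac1{2^*}\right)\io(\mu_1|u_{n,1}|^{2^*}+\mu_2|u_{n,2}|^{2^*})+\left(\tfrac{2^*}{q}-1\right)\lambda\io|u_{n,1}|^\alpha|u_{n,2}|^\beta,
\]
where all three coefficients on the right are strictly positive. Splitting $u_n=u_n^++\wt u_n$, $B(u_n^+,u_n^+)$ controls $\|u_n^+\|^2$, while the finite-dimensionality of $\wt X$ and the equivalence of $\|\cdot\|$ with $|\cdot|_{2^*}$ on it let us bound $\|\wt u_n\|$ by $|u_{n,i}|_{2^*}+\|u_n^+\|$; Young's inequality then closes the loop to a uniform bound in $X$.

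Next I extract a subsequence with $u_n\rh u$ in $X$, $u_n\to u$ in $L^s(\Omega)$ for every $s\in[1,2^*)$ and a.e., and $\wt u_n\to\wt u$ strongly in $\wt X$. A standard test-function argument identifies $u$ as a critical point of $\cJ_\lambda$, and the relation
\[
\cJ_\lambda(u)=\cJ_\lambda(u)-\tfrac1{2^*}\cJ'_\lambda(u)u=\tfrac1N B(u,u)=\tfrac1N\io(\mu_1|u_1|^{2^*}+\mu_2|u_2|^{2^*})+\tfrac{2^*}{N}\lambda\io|u_1|^\alpha|u_2|^\beta
\]
shows that $\cJ_\lambda(u)\ge 0$. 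Setting $v_n:=u_n-u$, the facts $v_n\to 0$ in $L^2(\Omega)$ and $\wt v_n\to 0$ in $X$ kill the lower-order $\kappa_i$-terms in the decomposition, while the Brezis-Lieb lemma applied to $|u_{n,i}|^{2^*}$ together with its counterpart for the coupling $|u_{n,1}|^\alpha|u_{n,2}|^\beta$ yield
\[
\cJ_\lambda(u_n)=\cJ_\lambda(u)+\cI_{\infty,\lambda}(v_n)+o(1),\qquad \cJ'_\lambda(u_n)v_n=\cI'_{\infty,\lambda}(v_n)v_n+o(1).
\]
In particular $\cI_{\infty,\lambda}(v_n)\to c-\cJ_\lambda(u)$ and $\cI'_{\infty,\lambda}(v_n)v_n=o(1)$, so $\|v_n\|^2=A_n+o(1)$ with $A_n:=\io(\mu_1|v_{n,1}|^{2^*}+\mu_2|v_{n,2}|^{2^*})+2^*\lambda\io|v_{n,1}|^\alpha|v_{n,2}|^\beta$.

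Finally, extending $v_n$ by zero to $\rn$, the definition of $S_{\infty,\lambda}$ gives $\|v_n\|^2\ge S_{\infty,\lambda}A_n^{2/2^*}$, so either $\|v_n\|\to 0$ or $A_n\ge S_{\infty,\lambda}^{N/2}+o(1)$. In the latter case $\cI_{\infty,\lambda}(v_n)=\frac1N A_n+o(1)\ge\frac1N S_{\infty,\lambda}^{N/2}+o(1)$, whence $c=\cJ_\lambda(u)+\lim\cI_{\infty,\lambda}(v_n)\ge\frac1N S_{\infty,\lambda}^{N/2}$, contradicting our choice of $c$. Thus $v_n\to 0$ and $u_n\to u$ in $X$. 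I expect the main technical obstacle to be the Brezis-Lieb-type splitting of the mixed term $\io|u_{n,1}|^\alpha|u_{n,2}|^\beta$ and of its differential, needed to justify the two displayed identities linking $\cJ_\lambda$ with the limit functional $\cI_{\infty,\lambda}$; the boundedness step is also a little delicate because $B$ is indefinite, but follows the subcritical template of Lemma \ref{ps}.
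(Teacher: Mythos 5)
Your argument is correct and follows essentially the same route as the paper: boundedness as in the subcritical Lemma \ref{ps}, identification of the weak limit $u$ as a critical point with $\cJ_\lambda(u)\ge 0$, a Brezis--Lieb splitting $\cJ_\lambda(u_n)=\cJ_\lambda(u)+\cI_{\infty,\lambda}(v_n)+o(1)$ (and its analogue for the derivative), and a comparison of the remainder with $S_{\infty,\lambda}$ to rule out loss of compactness below $\frac1N S_{\infty,\lambda}^{N/2}$. The only step you flag as an obstacle, the Brezis--Lieb-type lemma for the coupling term $\io|u_{n,1}|^\alpha|u_{n,2}|^\beta$ and its differential, is exactly what the paper settles by citing Lemmas A.2 and A.4 of \cite{cf}.
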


\begin{proof}
Let $(u_n)$ be a Palais-Smale sequence for $\cJ_\lambda$ with $\cJ_\lambda(u_n)\to c < \frac1NS_{\infty,\lambda}^{N/2}$. As in the proof of Lemma \ref{ps} we see that $(u_n)$ is bounded, so we may assume $u_n\rh u_0$ weakly in $X$, $u_n\to u_0$ strongly in $L^2(\Omega,\r^2)$ and a.e. in $\Omega$. It is easy to show that $u_0$ is a critical point of $\cJ_\lambda$. Set $v_n:=u_n-u_0$. Using \cite[Lemmas A.2 and A.4]{cf} we get that 
$$c+o(1)=\cJ_\lambda(u_n) = \cI_\lambda(v_n) + \cJ_\lambda(u_0)+o(1)$$
where 
\begin{align*}
\cI_\lambda(v_n)&:=\frac12 (\|v_{n,1}\|^2+\|v_{n,1}\|^2) - \frac1{2^*}\io(\mu_1|v_{n,1}|^{2^*}+\mu_2|v_{n,2}|^{2^*})\\
&\qquad- \lambda \io|v_{n,1}|^{\alpha}|v_{n,2}|^{\beta},
\end{align*}
and 
$$o(1)=\cJ_\lambda'(u_n) = \cI_\lambda'(v_n) + \cJ_\lambda'(u_0)+o(1) = \cI_\lambda'(v_n)+o(1).$$
Hence, setting $a:=N(c-\cJ_\lambda(u_0))$ and using $\cI_\lambda'(v_n)v_n=o(1)$, we obtain
\[
\|v_{n,1}\|^2+\|v_{n,1}\|^2\to a,\qquad\io(\mu_1|v_{n,1}|^{2^*}+\mu_2|v_{n,2}|^{2^*}+2^*\lambda|v_{n,1}|^{\alpha}|v_{n,2}|^{\beta})\to a.
\]
If $a\ne 0$ then, by the definition of $S_{\infty,\lambda}$, 
\begin{align*}
S_{\infty,\lambda} &\le\frac{\|v_{n,1}\|^2 + \|v_{n,2}\|^2}{\left(\io(\mu_1|v_{n,1}|^{2^*}+\mu_2|v_{n,2}|^{2^*}+2^*\lambda|v_{n,1}|^{\alpha}|v_{n,2}|^{\beta})\right)^{2/2^*}}\\
&=a^{2/N}+o(1)\le (Nc)^{2/N}+o(1)<S_{\infty,\lambda}.
\end{align*}
This is a contradiction. Therefore, $a=0$, i.e., $u_n\to u_0$ strongly in $X$, as claimed.
\end{proof}

\begin{lemma} \label{lem:sww}
Let $\omega$ be an open nonempty subset of $\Omega$. 
\begin{itemize}
\item[$(i)$] If $(w_1,w_2)\in \wt X$ and $w_i=0$ a.e. in $\omega$, then $w_i=0$ a.e. in $\Omega$. Consequently, $|\cdot|_{L^{2^*}(\omega)}$ is a norm in $\wt X$, and it is equivalent to any other norm because $\dim \wt X<\infty$.
\item[$(ii)$] There exists $C>0$ such that
$$\int_\omega |w_1|^\alpha|w_2|^\beta\geq C \|w_1\|^\alpha\|w_2\|^\beta\qquad \forall (w_1,w_2)\in \wt X.$$
\end{itemize}
\end{lemma}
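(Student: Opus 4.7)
The plan is to obtain (i) from unique continuation for Dirichlet eigenfunctions of $-\Delta$ and to deduce (ii) by a compactness argument on the finite-dimensional sphere combined with (i).

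For (i), every element of $\wt X_i = X_i^0\oplus X_i^-$ is, by definition, a finite linear combination of Dirichlet eigenfunctions $e_j$ of $-\Delta$ (namely those with $\gamma_j\le\kappa_i$). Each such $e_j$ solves $-\Delta e_j=\gamma_j e_j$ in $\Omega$, and by standard interior regularity for elliptic operators with analytic coefficients it is real-analytic in $\Omega$. Consequently every $w\in\wt X_i$ is real-analytic in $\Omega$. Since $\Omega$ is a bounded domain (in particular connected), the identity principle for real-analytic functions forces $w\equiv 0$ on $\Omega$ whenever $w$ vanishes on the nonempty open subset $\omega$; the hypothesis ``a.e.\ in $\omega$'' is equivalent to pointwise vanishing by continuity. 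The second assertion is then immediate: $|\cdot|_{L^{2^*}(\omega)}$ is a seminorm on the finite-dimensional space $\wt X$ which, by what was just proved (applied to each component), is nondegenerate, hence a norm; and all norms on a finite-dimensional space are equivalent.

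For (ii), I may assume $\wt X_1,\wt X_2\ne\{0\}$, as otherwise both sides of the claimed inequality vanish identically. By homogeneity in $w_1$ and in $w_2$ separately, it suffices to show
\[
C:=\inf\Bigl\{\int_\omega|w_1|^\alpha|w_2|^\beta:(w_1,w_2)\in S_1\times S_2\Bigr\}>0,
\]
where $S_i:=\{w\in\wt X_i:\|w\|=1\}$. Since each $\wt X_i$ is finite-dimensional, $S_1\times S_2$ is compact and the functional above is continuous, so the infimum is attained at some $(w_1^\ast,w_2^\ast)$. If $C=0$, then $|w_1^\ast|^\alpha|w_2^\ast|^\beta=0$ a.e.\ on $\omega$, so $\omega$ coincides, up to a set of measure zero, with $\{w_1^\ast=0\}\cup\{w_2^\ast=0\}$. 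But each $w_i^\ast$ is a \emph{nonzero} real-analytic function on the connected set $\Omega$, hence its zero set has Lebesgue measure zero (a standard fact, provable via the Lebesgue density theorem and the identity principle). A union of two null sets cannot cover $\omega$, contradicting $|\omega|>0$; so $C>0$, and rescaling yields the claimed inequality.

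The one point that genuinely requires care is the real-analyticity of an arbitrary $w\in\wt X_i$: $w$ itself need not solve a single eigenvalue equation $-\Delta w=\gamma w$ (unless the relevant $\gamma_j$'s coincide), so one cannot apply interior analyticity to $w$ directly but must invoke it for each eigenfunction $e_j$ and then use closure of real-analyticity under finite linear combinations. Once this is settled, parts (i) and (ii) reduce to the identity principle and a routine compactness/measure-theoretic argument.
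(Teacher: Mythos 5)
Your proof is correct, and for part (ii) it follows essentially the same compactness route as the paper: reduce by homogeneity to the product of unit spheres in the finite-dimensional spaces $\wt X_1\times\wt X_2$, use compactness to pass to a limit (the paper runs this as a contradiction argument with a minimizing sequence; you attain the infimum directly -- the two are equivalent), and derive a contradiction from the vanishing of $\int_\omega|w_1|^\alpha|w_2|^\beta$. The differences are twofold. First, for (i) the paper simply cites \cite[Lemma 3.3]{sww}, whereas you give a self-contained proof via interior analyticity of Dirichlet eigenfunctions and the identity principle; your observation that a general $w\in\wt X_i$ need not satisfy a single eigenvalue equation, so analyticity must be invoked eigenfunction by eigenfunction, is exactly the right point of care. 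Second, at the end of (ii) the paper writes only that $\int_\omega|w_1|^\alpha|w_2|^\beta=0$ with $w_1,w_2\neq0$ ``is impossible by (i)''; strictly speaking this needs one more step, since $w_1w_2=0$ a.e.\ on $\omega$ does not by itself say that one of the $w_i$ vanishes a.e.\ on $\omega$. You supply the missing argument (the zero set of a nonzero real-analytic function on the connected set $\Omega$ is Lebesgue-null, so two such sets cannot cover $\omega$); an alternative is to restrict to the open subset of $\omega$ where, say, $w_1\neq0$ and apply (i) there. Either way, your version is complete and, if anything, slightly more careful than the paper's.
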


\begin{proof}
$(i)$ is proved in \cite[Lemma 3.3]{sww}.

$(ii):$ Arguing by contradiction, assume there exist $(w_{1,n},w_{2,n})\in \wt X$ such that $\|w_{1,n}\|=\|w_{2,n}\|=1$ and $\int_\omega|w_{1,n}|^\alpha|w_{2,n}|^{\beta}\to 0$. Since $\wt X$ is finite-dimensional, passing to a subsequence, we have that $w_{i,n}\to w_i$ for $i=1,2$. Then, $\|w_1\|=\|w_2\|=1$ and $\int_\omega|w_1|^\alpha|w_2|^{\beta}=0$, which is impossible by $(i)$. This proves the claim.
\end{proof}

Without loss of generality, we assume that $0\in\Omega$. We fix a radial cut-off function $\psi\in \cC_c^\infty(\Omega)$ such that $\psi=1$ for $|x|\le\delta$, $\delta>0$ sufficiently small. The following estimates are well known; see, e.g., \cite{bn,w}.

\begin{lemma} \label{lem:bn}
Set $\bar{u}_\eps:=\psi U_\eps$, $\eps>0$. Then, as $\eps\to 0$,
\begin{align*}
&\io|\nabla \bar{u}_\eps|^2=\irn|\nabla U_\eps|^2+O(\eps^{N-2}),\\
&\io\bar{u}_\eps^{2^*}=\irn U_\eps^{2^*}+O(\eps^{N}),\\
&\io\bar{u}_\eps^{2^*-1}=O(\eps^{\frac{N-2}{2}}),\quad \io\bar{u}_\eps=O(\eps^{\frac{N-2}{2}}),\quad \io|\nabla\bar{u}_\eps|=O(\eps^{\frac{N-2}{2}}),\\
&\io\bar{u}_\eps^{2^*-2}=O(\eps^2)\text{\; if \;}N\geq 5,\quad \io\bar{u}_\eps^{2^*-2}=O(\eps^2|\ln\eps|) \text{\; if \;} N=4,\\
&\io\bar{u}_\eps^2\geq 
\begin{cases}
d\eps^2|\ln\eps|+O(\eps^2) &\text{if }N=4,\\
d\eps^2+O(\eps^{N-2}) &\text{if }N\geq 5,
\end{cases}
\end{align*}
where $d$ is a positive constant that depends on $N$.
\end{lemma}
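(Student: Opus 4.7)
The plan is to reduce every estimate to an integral over $\mathbb{R}^N$ via the scaling
\[
U_\eps(x) = \eps^{-(N-2)/2}U_1(x/\eps), \qquad U_1(y) = [N(N-2)]^{(N-2)/4}(1+|y|^2)^{-(N-2)/2},
\]
and to exploit the fact that $\psi \equiv 1$ on $B_\delta(0)$ so that errors coming from the cut-off are concentrated on $\{|x|\ge \delta\}$, where the decays $U_\eps(x) = O(\eps^{(N-2)/2}|x|^{-(N-2)})$ and $|\nabla U_\eps(x)| = O(\eps^{(N-2)/2}|x|^{-(N-1)})$ hold. The recurrent mechanism is that, after substituting $y = x/\eps$, an integral of the form $\int_\Omega U_\eps^q$ becomes $\eps^{N-q(N-2)/2}\int_{\Omega/\eps}U_1^q$, and the behaviour of $\int_{B_R}U_1^q$ as $R\to\infty$ (convergent, logarithmically divergent, or polynomially divergent according to the size of $q$) dictates the stated rate.

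For the first estimate, I would expand $|\nabla \bar u_\eps|^2 = \psi^2|\nabla U_\eps|^2+2\psi U_\eps\nabla\psi\cdot\nabla U_\eps+U_\eps^2|\nabla\psi|^2$. The first term differs from $\int_{\mathbb{R}^N}|\nabla U_\eps|^2$ by a tail integral on $\{|x|\ge \delta\}$, which by the gradient decay is bounded by $C\eps^{N-2}\int_\delta^\infty r^{1-N}\,dr = O(\eps^{N-2})$; the cross term and the $|\nabla\psi|^2$ term, being supported on $\{|x|\ge \delta\}$, are controlled by the same bound. The estimate for $\int_\Omega \bar u_\eps^{2^*}$ is entirely analogous, with tail $O(\eps^N)$ from $U_\eps^{2^*}(x)\sim \eps^N|x|^{-2N}$. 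The three $O(\eps^{(N-2)/2})$ bounds are immediate from $\bar u_\eps \le U_\eps$, a change of variables, and the uniform bounds $\int_{\mathbb{R}^N}U_1, \int_{\mathbb{R}^N}U_1^{2^*-1}, \int_{\mathbb{R}^N}|\nabla U_1|<\infty$ (for $U_1$ itself one uses that $U_1(y)\sim |y|^{-(N-2)}$ is integrable near infinity because $\int r^{N-1}\cdot r^{-(N-2)}\,dr$ is truncated at the bounded set $\Omega/\eps$ of diameter $\sim 1/\eps$; the resulting bound $\eps^{(N+2)/2}\cdot\eps^{-2}=\eps^{(N-2)/2}$ matches the claim).

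The dimension split in the last two estimates comes from the decay of $U_1^{2^*-2}$ and $U_1^2$. For $\int_\Omega \bar u_\eps^{2^*-2}$, the scaling yields $\eps^2\int_{\Omega/\eps}U_1^{2^*-2}$, and since $U_1^{2^*-2}(y)\sim |y|^{-(N-2)(2^*-2)}$, the exponent equals $N$ exactly when $N=4$, producing the logarithmic divergence $|\ln \eps|$; for $N\ge 5$ the integral is uniformly bounded. For the lower bound on $\int_\Omega \bar u_\eps^2$ I would use $\bar u_\eps \ge U_\eps\mathbf{1}_{B_\delta}$, giving
\[
\io \bar u_\eps^{2} \;\ge\; \int_{B_\delta}U_\eps^{2}\,dx \;=\; \eps^{2}\int_{B_{\delta/\eps}}U_1(y)^{2}\,dy,
\]
and then evaluate the right-hand side in spherical coordinates: $U_1^2(y) \sim c\, |y|^{-2(N-2)}$ for $|y|$ large, so $\int_{B_R}U_1^2 = d\ln R + O(1)$ if $N=4$ (since $\int r^{3-N}\,dr = \int r^{-1}\,dr$ diverges like $\ln R$), while $\int_{B_R}U_1^2 = d + O(R^{4-N})$ converges if $N\ge 5$, with error $O(\eps^{N-4})$ after multiplying by $\eps^2$, hence $O(\eps^{N-2})$ in the stated form.

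I do not expect a main obstacle; the whole lemma is bookkeeping of remainder terms in one-variable integrals. The most delicate point is the $N=4$ lower bound on $\int \bar u_\eps^2$, where one must separate the logarithmic leading term from the $O(\eps^2)$ remainder coming from both the truncation at $R=\delta/\eps$ and the difference between $U_1^2$ and its leading asymptotic $c|y|^{-4}$ in the annulus $\{1\le |y|\le \delta/\eps\}$.
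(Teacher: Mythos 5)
The paper does not prove this lemma at all; it simply cites \cite{bn} and \cite{w}, and your computation is exactly the standard one from those sources: rescale by $y=x/\eps$ and read off each rate from the growth of the truncated integral $\int_{B_R}U_1^q$ as $R=\delta/\eps\to\infty$. Your outline and all final rates are correct, but two intermediate assertions are false and need repair. First, $\int_{\rn}|\nabla U_1|=\infty$: since $|\nabla U_1(y)|\sim c|y|^{1-N}$, the radial integral $\int^R r^{N-1}\cdot r^{1-N}\,dr$ grows like $R$, so $|\nabla U_1|$ is \emph{not} integrable on $\rn$; you must apply the same truncation argument you give for $U_1$ itself, which yields $\eps^{N/2}\cdot O(\eps^{-1})=O(\eps^{(N-2)/2})$ and saves the stated bound. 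Second, in the estimate of $\io\bar u_\eps^{2^*-2}$ for $N\ge5$, the scaling prefactor is $\eps^{\,N-q(N-2)/2}=\eps^{N-2}$ with $q=2^*-2$, not $\eps^2$ (the two agree only when $N=4$), and $\int_{B_R}U_1^{2^*-2}$ is \emph{not} uniformly bounded for $N\ge 5$: since $U_1^{2^*-2}\sim c|y|^{-4}$ and $4<N$, it grows like $R^{N-4}$. The two errors cancel, $\eps^{N-2}\cdot O(\eps^{-(N-4)})=O(\eps^2)$, so the claimed rate is right, but the justification as written is not. Everything else — the tail estimates giving $O(\eps^{N-2})$ and $O(\eps^N)$ for the first two identities, the three $O(\eps^{(N-2)/2})$ bounds, and the lower bound on $\io\bar u_\eps^2$ in both dimension regimes — checks out.
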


\begin{proof}[Proof of Theorem \ref{critical}]
Fix $\lambda>\Lambda_1$. For $\eps>0$, let $u_{\eps,1}:=s_\lambda\bar{u}_\eps$ and $u_{\eps,2}:=t_\lambda\bar{u}_\eps$  with $\bar{u}_\eps=\psi U_\eps$ as above and $s_\lambda,t_\lambda>0$ as in \eqref{stlambda}. Set $u_\eps=(u_{\eps,1},u_{\eps,2})$. We apply Proposition \ref{bbf} with $Y = X^+$ and
$$Z:=\{tu_\eps + w: t\in\r,\; w\in\wt X\}.$$
Next we show that, for $\eps$ small enough,
\begin{equation}\label{eq:claim}
\sup_Z \cJ_\lambda < \frac1NS_{\infty,\lambda}^{N/2}.
\end{equation}
We follow the proof of \cite[Lemma 3.5]{sww}, but the argument now is more delicate due the presence of the interaction term. 

Since $\kappa_1,\kappa_2> 0$, Proposition \ref{prop:lambda} and Lemma \ref{lem:bn} yield the existence of a constant $C>0$ such that
\begin{align} 
\max_{t>0}\cJ_\lambda(tu_\eps) & = \frac1N\left( \frac{(s_\lambda^2+t_\lambda^2)\io|\nabla\bar u_\eps|^2 - (\kappa_1s_\lambda^2+\kappa_2t_\lambda^2)\io\bar u_\eps^2}{(\mu_1s_\lambda^{2^*}+\mu_2t_\lambda^{2^*}+2^*\lambda s_\lambda^\alpha t_\lambda^\beta)^{2/2^*}(\io\bar u_\eps^{2^*})^{2/2^*}} \right)^{N/2} \label{eq:1} \\
& \le 
\begin{cases}
\frac1N\left(S_{\infty,\lambda} - C\eps^2+o(\eps^2)\right)^{N/2} &\text{if }N\geq 5,\nonumber\\
\frac1N\left(S_{\infty,\lambda} - C\eps^2|\ln\eps|+O(\eps^2)\right)^2 &\text{if }N=4.\nonumber
\end{cases} \\
& < \frac1N S_{\infty,\lambda}^{N/2} \nonumber
\end{align}
for every $\eps>0$ small enough.

Let now $tu_\eps+w\in Z$ with $w=(w_1,w_2)\in \wt X\smallsetminus\{(0,0)\}$ and set $\omega := \Omega\smallsetminus\text{supp\,}\psi$. We may assume $t>0$. The computations below become simpler if $w_1=0$ or $w_2=0$, as several terms will vanish.

From the convexity of the function $x\mapsto|a+x|^q$ one easily sees that
$$|a+b|^q\geq a^q+qa^{q-1}b\qquad\forall b\in\r,\;a\geq 0,\;q\geq 1.$$
Therefore, using Lemma \ref{lem:sww} we obtain
\begin{align} 
&B_i(tu_{\eps,i}+w_i,tu_{\eps,i}+w_i) \label{bilinear} \\ 
&\qquad = B_i(tu_{\eps,i},tu_{\eps,i}) +2B_i(tu_{\eps,i},w_i) +  B_i(w_i,w_i),\quad i=1,2, \nonumber \\
&\io|tu_{\eps,i}+w_i|^{2^*} = \int_{\Omega\smallsetminus\omega}|tu_{\eps,i}+w_i|^{2^*} + \int_{\omega}|w_i|^{2^*} \label{convex} \\
&\qquad \ge \io |tu_{\eps,i}|^{2^*} + \io 2^*t^{2^*-1}u_{\eps,i}^{2^*-1}w_i+c_1\|w_i\|^{2^*},\quad i=1,2, \nonumber \\
& \io|tu_{\eps,1}+w_1|^{\alpha}|tu_{\eps,2}+w_2|^{\beta} \label{mixed} \\
&\qquad = \int_{\Omega\setminus\omega}|tu_{\eps,1}+w_1|^{\alpha}|tu_{\eps,2}+w_2|^{\beta} + \int_\omega|w_1|^\alpha|w_2|^{\beta}  \nonumber\\
&\qquad\ge \io(t^{\alpha}u_{\eps,1}^{\alpha}+\alpha t^{\alpha-1}u_{\eps,1}^{\alpha-1}w_1)(t^{\beta}u_{\eps,2}^{\beta}+\beta t^{\beta-1}u_{\eps,2}^{\beta-1}w_2)\nonumber\\
&\qquad\qquad+ c_2\|w_1\|^\alpha\|w_2\|^{\beta} \nonumber \\
&\qquad\ge \io(tu_{\eps,1})^{\alpha}(tu_{\eps,2})^{\beta} + t^{2^*-1}\left(c_3\io\bar{u}_{\eps}^{2^*-1}w_1 + c_4\io\bar{u}_{\eps}^{2^*-1}w_2\right) \nonumber\\
&\qquad\qquad + c_5t^{2^*-2}\io\bar{u}_{\eps}^{2^*-2}w_1w_2+ c_2\|w_1\|^\alpha\|w_2\|^{\beta}. \nonumber
\end{align}
Here and hereafter $c_j$ denotes a positive constant. From inequalities \eqref{bilinear}, \eqref{convex} and Lemma \ref{lem:bn} we get
\begin{align*}
\cJ_\lambda(tu_{\eps}+w)&\leq c_6\Big(t^2+t(\|w_1\|+\|w_2\|)+\|w_1\|^2+\|w_2\|^2 \\
&\quad +\eps^\frac{N-2}{2} t^{2^*-1}(\|w_1\|+\|w_2\|)\Big) - c_7(t^{2^*}+\|w_1\|^{2^*}+\|w_2\|^{2^*}).
\end{align*}
So, as $t^{2^*-1}\|w_i\|\le t^{2^*}+\|w_i\|^{2^*}$, there exists $R>0$ such that, for every $\eps$ small enough,
\begin{equation} \label{0}
\cJ_\lambda(tu_\eps+w) \le 0\qquad\text{if\; }t\ge R\text{\; or\; }\|w\|\ge R.
\end{equation}
From now on, we assume that $t\le R$ and $\|w\|\le R$. We distinguish two cases.

Let $N\geq 5$. Since $B_i(w_i,w_i)\le 0$, using Lemma \ref{lem:bn}, the inequalities \eqref{bilinear}-\eqref{mixed} and the inequalities
\begin{align} 
\max_{s>0}(rs-s^q) &\le C_q\, r^{\frac{q}{q-1}}\quad\text{if } q>1, \label{q}\\
\max_{0\le s_1,s_2\le R}(rs_1s_2-s_1^\alpha s_2^\beta) &\le C_{R,\alpha,\beta}\,\max\{r^{\frac{\alpha}{\alpha-1}},r^{\frac{\beta}{\beta-1}}\} \quad\text{if } \alpha,\beta>1, \label{ab}
\end{align}
which hold true for every $r\ge 0$ ($C_q$ and $C_{R,\alpha,\beta}$ are positive constants that depend only on their subindices), we obtain
\begin{align}
\cJ_\lambda(tu_\eps+w) &\le \cJ_\lambda(tu_\eps) \label{eq:2} \\
&\qquad + O(\eps^\frac{N-2}{2})(\|w_1\|+\|w_2\|)-c_8(\|w_1\|^{2^*}+\|w_2\|^{2^*}) \nonumber \\
&\qquad + O(\eps^2)\|w_1\|\,\|w_2\| - c_9\|w_1\|^\alpha\|w_2\|^\beta \nonumber \\
&\le\cJ_\lambda(tu_\eps) + O(\eps^\frac{N(N-2)}{N+2})+ o(\eps^2).\nonumber 
\end{align}
Note that $\frac{N(N-2)}{N+2}>2$ if $N\geq 5$. So \eqref{eq:1} and \eqref{eq:2} give
\begin{equation*}
\cJ_\lambda(tu_\eps+w)\le\frac1N\left(S_{\infty,\lambda} - C\eps^2+o(\eps^2)\right)^{N/2} + o(\eps^2)
\end{equation*}
for all $t\in(0,R],\,\|w\|\le R$. This inequality, together with \eqref{0}, yields \eqref{eq:claim} for $\eps$ small enough.

Let now $N=4$. Since $\kappa_1,\kappa_2$ are not eigenvalues of $-\Delta$ in $D^{1,2}_0(\Omega)$, there exists $c_{10}>0$ such that $-B_i(w_i,w_i)\ge c_{10}\|w_i\|^2$ for all $w_i\in\wt X_i$, $i=1,2$. Hence, from Lemma \ref{lem:bn} and the inequalities \eqref{bilinear}-\eqref{mixed}, \eqref{q} and \eqref{ab}, we get
\begin{align}
\cJ_\lambda(tu_\eps+w) &\le \cJ_\lambda(tu_\eps) + O(\eps)(\|w_1\|+\|w_2\|)-c_8(\|w_1\|^2+\|w_2\|^2) \nonumber \\
&\qquad + O(\eps^2|\ln \eps|)\|w_1\|\,\|w_2\| -  c_9\|w_1\|^\alpha\|w_2\|^\beta \nonumber \\
&\le \cJ_\lambda(tu_\eps) + O(\eps^2).\nonumber 
\end{align}
Combining this inequality with \eqref{eq:1} gives
\begin{equation*}
\cJ_\lambda(tu_\eps+w)\le\frac14\left(S_{\infty,\lambda} - C\eps^2|\ln\eps|+ O(\eps^2)\right)^2 + O(\eps^2),
\end{equation*}
for all $t\in(0,R],\,\|w\|\le R$. This inequality, together with \eqref{0}, yields \eqref{eq:claim} for $\eps$ small enough.

Using Proposition \ref{bbf} and Lemma \ref{below} we may now proceed as in the proof of Theorem \ref{mainthm} to first obtain a nontrivial solution at some level $c<\frac1NS_{\infty,\lambda}^{N/2}$ and then show that there is a ground state $\bar u$ satisfying $0<B(\bar u,\bar u)<\min\{B_1(\bar w_1,\bar w_1),B_2(\bar w_2,\bar w_2)\}$.
\end{proof}

\section{Synchronized solutions} \label{sec:synchronized}

If $\kappa_1=\kappa_2$ one may look for \emph{synchronized solutions} to the system \eqref{syst}, i.e., solutions of the form $(sw,tw)$ with $s,t\in\r$. In this case, solutions exist for any $\lambda>0$.

\begin{lemma} \label{lem:synchronized}
Let $\kappa_1=\kappa_2$ and let $w$ be a nontrivial solution to equation \eqref{eq}. Then, there exist $s,t>0$ such that $(sw,tw)$ is a solution to the system \eqref{syst} if and only if there exists $r>0$ such that
\begin{equation} \label{eq:h}
h(r) := \mu_1 r^{p-2} + \lambda\alpha r^{\alpha -2}-\lambda\beta r^\alpha -\mu_2=0.
\end{equation}
\end{lemma}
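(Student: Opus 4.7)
The plan is to verify both directions of the equivalence by direct substitution. For the forward direction, I would insert $u_1:=sw$ and $u_2:=tw$ with $s,t>0$ into \eqref{syst}. Writing $\kappa:=\kappa_1=\kappa_2$ and letting $\mu_0\in\{\mu_1,\mu_2\}$ denote the coefficient appearing in the scalar equation \eqref{eq} that $w$ satisfies, the left-hand sides of the two system equations become $s\mu_0|w|^{p-2}w$ and $t\mu_0|w|^{p-2}w$. Using $\alpha+\beta=p$, the interaction terms on the right collapse to $\lambda\alpha s^{\alpha-1}t^\beta|w|^{p-2}w$ and $\lambda\beta s^\alpha t^{\beta-1}|w|^{p-2}w$. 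Cancelling the common pointwise factor $|w|^{p-2}w$ (legitimate since $w\not\equiv 0$, so this function is nonzero on a set of positive measure) yields
\[
\mu_0=\mu_1s^{p-2}+\lambda\alpha s^{\alpha-2}t^\beta,\qquad \mu_0=\mu_2t^{p-2}+\lambda\beta s^\alpha t^{\beta-2}.
\]

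The second step is to set $r:=s/t$. Because $(\alpha-2)+\beta=p-2$ and $\alpha+(\beta-2)=p-2$, each right-hand side factors as $t^{p-2}$ times a polynomial in $r$:
\[
\mu_0=t^{p-2}\bigl(\mu_1 r^{p-2}+\lambda\alpha r^{\alpha-2}\bigr)=t^{p-2}\bigl(\mu_2+\lambda\beta r^\alpha\bigr).
\]
Equating the two expressions in parentheses gives precisely $h(r)=0$, which proves one implication.

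For the converse, given $r>0$ with $h(r)=0$, I would define $t>0$ by $t^{p-2}:=\mu_0/(\mu_2+\lambda\beta r^\alpha)$, a valid positive number since every constant on the right is positive and $p>2$, and then set $s:=rt>0$. By construction, the second reduced identity holds; the first then follows from it through the equality $h(r)=0$. Reversing the substitution, $(sw,tw)$ satisfies both equations of \eqref{syst}.

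The argument is entirely algebraic, so there is no genuine analytic obstacle. The only step that requires a brief comment is the cancellation of $|w|^{p-2}w$ from the pointwise identities; this is justified because both sides are constant scalar multiples of the same nonvanishing function on the set $\{w\ne 0\}$, which has positive measure.
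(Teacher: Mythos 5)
Your computation is correct and is exactly the substitution argument that the paper outsources to \cite[Lemma 4.1]{cf} (with $2^*$ replaced by $p$): insert $(sw,tw)$, use the scalar equation for $w$ to rewrite the left-hand sides, cancel $|w|^{p-2}w$ on the positive-measure set $\{w\neq 0\}$, and reduce to the two algebraic identities whose compatibility in $r=s/t$ is precisely $h(r)=0$. Your handling of the ambiguity in which $\mu_i$ appears in \eqref{eq} via $\mu_0$, and the explicit choice $t^{p-2}=\mu_0/(\mu_2+\lambda\beta r^\alpha)$ in the converse, are both fine.
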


\begin{proof}
The proof of \eqref{eq:h} is the same as that of \cite[Lemma 4.1]{cf} with $2^*$ replaced by $p$. 
\end{proof}

\begin{theorem}
Let $\kappa_1=\kappa_2$, and assume that \eqref{eq:h} holds true for some $r>0$.
\begin{itemize}
\item[$(a)$] If $p\in(2,2^*)$, then the system \eqref{syst} has infinitely many fully nontrivial synchronized solutions.
\item[$(b)$] Let $N\geq 4$ and assume that $\kappa_1$ is not an eigenvalue of $-\Delta$ in $D^{1,2}_0(\Omega)$ if $N=4$. Then, if $p=2^*$, the system \eqref{syst} has a fully nontrivial synchronized solution.
\end{itemize}
\end{theorem}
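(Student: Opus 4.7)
The plan is to use Lemma \ref{lem:synchronized} to reduce both parts to existence/multiplicity for the scalar equation
\begin{equation*}
-\Delta w - \kappa_1 w = |w|^{p-2}w, \quad w\in D^{1,2}_0(\Omega).
\end{equation*}
Indeed, fixing any $r>0$ with $h(r)=0$, a direct computation shows that any nontrivial solution $w$ of this normalized equation produces a fully nontrivial synchronized solution $(sw,tw)$ of \eqref{syst} on setting $s=rt$ and $t=(\mu_2+\lambda\beta r^\alpha)^{-1/(p-2)}>0$, and two scalar solutions with distinct $L^p$-norms yield distinct synchronized pairs. So (a) reduces to producing infinitely many nontrivial solutions of the scalar problem, and (b) to producing one.

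For part (a), I would apply Proposition \ref{bbf} (or equivalently a Fountain Theorem) to the even $\mathcal{C}^1$ functional
\begin{equation*}
J(w) := \frac12\io\bigl(|\nabla w|^2 - \kappa_1 w^2\bigr) - \frac{1}{p}\io|w|^p.
\end{equation*}
The Palais-Smale condition at every level follows from a scalar specialisation of Lemma \ref{ps} with the cross term absent. The linking hypotheses are verified with $Y := X_1^+$ and $Z := W_m = \mathrm{span}\{e_1,\ldots,e_m\}$: the lower bound on a small $Y$-sphere is a direct copy of Lemma \ref{YZ}$(i)$, while on the finite-dimensional space $W_m$ the term $-\frac{1}{p}\io|w|^p$ dominates and makes $J$ bounded above and coercive-from-above; no $\lambda$-dependent term appears here, so no largeness of $\lambda$ is needed. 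Letting $m\to\infty$ and choosing correspondingly large cut-off levels produces an unbounded sequence of critical values $c_m\to\infty$, hence infinitely many geometrically distinct solutions $w_m$ of the scalar equation. Feeding the $w_m$ through Lemma \ref{lem:synchronized} delivers the infinitely many fully nontrivial synchronized solutions claimed.

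For part (b), the scalar equation with $p=2^*$ is the Brezis-Nirenberg problem for $-\Delta-\kappa_1$. Under the hypotheses (with $\kappa_1>0$ implicit, $N\geq 5$, or $N=4$ and $\kappa_1$ not a Dirichlet eigenvalue), the ground-state existence result Theorem 3.6 of \cite{sww}, already invoked in Section \ref{sec:preliminaries}, yields a nontrivial solution $\bar w$. Lemma \ref{lem:synchronized} then produces the required synchronized solution $(rt\bar w,\, t\bar w)$.

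The main obstacle I expect is the multiplicity step in (a) when $\kappa_1$ lies above $\lambda_1(\Omega)$: the ordinary symmetric mountain-pass is inadequate and one must invoke the genuine linking structure of Proposition \ref{bbf} together with PS at every positive level. Both ingredients, however, are scalar counterparts of arguments already worked out in Section \ref{sec:subcritical}, and no essentially new analytic difficulty arises.
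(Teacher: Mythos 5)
Your proposal is correct and takes essentially the same route as the paper: reduce via Lemma \ref{lem:synchronized} to the scalar equation \eqref{eq} (your explicit formulas $s=rt$, $t=(\mu_2+\lambda\beta r^\alpha)^{-1/(p-2)}$ are consistent with \eqref{eq:h}), then invoke existence/multiplicity for that equation, citing \cite[Theorem 3.6]{sww} for part $(b)$ exactly as the paper does. The only difference is that for part $(a)$ the paper simply cites \cite[Theorem 3.2]{sw} for the infinitely many scalar solutions, whereas you re-derive this by the symmetric linking argument of Proposition \ref{bbf} with $Y=X_1^+$, $Z=W_m$ and $m\to\infty$; both are valid.
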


\begin{proof}
$(a)$ It is well known that the equation \eqref{eq} has infinitely many nontrivial solutions if $p\in(2,2^*)$; see, e.g., \cite[Theorem 3.2]{sw}

$(b)$ It is shown in \cite[Theorem 3.6]{sww} that, under the given assumptions, the equation \eqref{eq} has a ground state solution for $p=2^*$.
\end{proof}

\begin{remark} \emph{
Since 
\[
h(r) = r^{\alpha-2}(\mu_1 r^{\beta} + \lambda(\alpha -\beta r^2) -\mu_2r^{2-\alpha})
\]
and 
\[
h(r) = r^{\alpha}(\mu_1 r^{\beta-2} + \lambda(\alpha r^{-2} -\beta) -\mu_2r^{-\alpha}), 
\]
we see that $h(r)>0$ for small $r>0$ if either $\alpha<2$ or $\alpha=2$ and $\lambda > \mu_2/2$, and that $h(r)<0$ for large $r$ if either $\beta<2$ or $\beta=2$ and $\lambda > \mu_1/2$. So in all of these cases we have that $h(r)=0$ for some $r>0$.}

\emph{Note, in particular, that if $N\ge 6$, then necessarily $\alpha,\beta<2$.
}
\end{remark}

\bigskip

\begin{flushleft}
\textbf{Mónica Clapp}\\
Instituto de Matemáticas\\
Universidad Nacional Autónoma de México\\
Circuito Exterior, Ciudad Universitaria\\
04510 Coyoacán, Ciudad de México, Mexico\\
\texttt{monica.clapp@im.unam.mx} 
\medskip

\textbf{Andrzej Szulkin}\\
Department of Mathematics\\
Stockholm University\\
106 91 Stockholm, Sweden\\
\texttt{andrzejs@math.su.se} 
\end{flushleft}

\end{document}